\newtheorem{lem}{Lemma}[section]
\newtheorem{thm}[lem]{Theorem}
\newtheorem{prob}[lem]{Problem}
\newtheorem{prop}[lem]{Proposition}
\def\a{\alpha}\def\b{\beta}\def\d{\delta}\def\s{\sigma}
\def\BC{{\cal BC}}
\def\G{\Gamma}
\def\Z{\mathbb{Z}}
\def\Aut{\hbox{\rm Aut}}
\def\BiCay{\hbox{\rm BiCay}}
\def\Cay{\hbox{\rm Cay}}
\def\H{\hbox{\rm H}}
\newcommand{\sg}[1]{\langle #1 \rangle}
\begin{document}
\title{
%Classification of finite groups all of whose Haar graphs are Cayley graphs\\
 Existence of non-Cayley Haar graphs}
\footnotetext{The first author was partially supported by the National Natural Science Foundation of China (11731002, 11571035) and by the 111 Project of China (B16002). The second author was partially supported by the Slovenian Research Agency (research program P1-0285, and research projects N1-0062, J1-9108 and J1-1695). The third author was partially supported by the National Natural Science Foundation of China (11731002). The fourth author was partially supported by the Fundamental Research Funds for the Central Universities and Innovation Foundation of BUPT for Youth (500419775).

*Corresponding author. E-mails: yqfeng@bjtu.edu.cn (Y.-Q.\ Feng), istvan.kovacs@upr.si (I.\ Kov\'acs),
wangj@pku.edu.cn (J.\ Wang), dwyang@bjtu.edu.cn
(D.-W.\ Yang).}

\author{Yan-Quan~Feng$^{\, a},$ Istv\'an~Kov\'acs$^{\, b},$ Jie Wang$^{\, c}$,
Da-Wei Yang*$^{\, d}$ \\ [+1ex]
{\small\em $^a$ Department of Mathematics, Beijing Jiaotong University, Beijing, 100044, P.~R.~China} \\
{\small\em $^b$ IAM and FAMNIT, University of Primorska,
Glakolja\v{s}ka 8, 6000 Koper, Slovenia}\\
{\small\em $^c$ School of Mathematical Sciences, Peking University, Beijing, 100871, P.~R.~China}\\
{\small\em $^d$ School of Sciences, Beijing University of Posts and Telecommunications, Beijing, 100876, P.R. China}
}
\date{}
\maketitle

\begin{abstract}
A Cayley graph of a group $H$ is a finite simple graph $\G$ such
that its automorphism group $\Aut(\G)$ contains a subgroup isomorphic to $H$ acting regularly on $V(\G)$, while a Haar graph of $H$ is
a finite simple bipartite graph $\Sigma$ such that $\Aut(\Sigma)$ contains a subgroup isomorphic to $H$ acting semiregularly on
$V(\Sigma)$ and the $H$-orbits are equal to the partite sets of
$\Sigma$. It is well-known that every Haar graph of finite
abelian groups is a Cayley graph. In this paper, we
prove that every finite non-abelian group admits a non-Cayley Haar graph except the dihedral groups $D_6$, $D_8$, $D_{10}$,
the quaternion group $Q_8$ and the group $Q_8\times\Z_2$. This answers
an open problem proposed by Est\'elyi and Pisanski in 2016.
\\ [+1.5ex]
{\em Keywords:} Haar graph, Cayley graph, vertex-transitive graph.
\\ [+1.5ex]
{\em MSC 2010:} 05E18 (primary), 20B25 (secondary).
\end{abstract}

\section{Introduction}

All groups in this paper are finite and all graphs
are finite and undirected. Let $H$ be a group, and let $R$, $L$ and $S$ be three subsets of $H$ such that $R^{-1}=R$, $L^{-1}=L$, and $R\cup L$
does not contain the identity element $1$ of $H$.
The {\em Cayley graph} of $H$ relative to the subset $R,$ denoted by $\Cay(H,R),$ is the graph having vertex set
$H,$ and edge set $\{ \{h,xh\} : x \in R, h \in H \}$, and the {\em bi-Cayley graph} of $H$ relative to the triple $(R, L, S)$, denoted by $\BiCay(H, R, L, S)$, is the graph having vertex set the union of the right part
$H_0=\{h_0 : h\in H\}$ and the left part $H_1=\{h_1 : h\in H\}$, and edge set being the union of the following three sets
\begin{itemize}
\item $\big\{ \{h_0,(xh)_0\} : x \in R, \, h \in H \big\}$ (right edges),
\item $\big\{ \{h_1,(xh)_1\}:  x \in L, \, h \in H \big\}$ (left edges),
\item $\big\{ \{h_0,(xh)_1\} : x \in S, \, h \in H\big\}$ (spokes).
\end{itemize}
In the special case when $R=L=\emptyset$, the bi-Cayley graph
$\BiCay(H, \emptyset, \emptyset, S)$ is called a {\em Haar graph of $H$ relative to the set $S$}, denoted by $\H(H,S)$.
A Haar graph $H(H,S)$ of a finite group $H$ was first defined as a voltage graph of a dipole
with no loops and $|S|$ parallel edges (see~\cite{HMP}), and the name {\em Haar graph}
comes from the fact that, when $H$ is an abelian group the Schur norm of the corresponding adjacency matrix can be easily evaluated via the so called Haar integral on $H$ (see~\cite{H}).

Symmetries of Cayley graphs have always been an active topic among algebraic combinatorics, and lately, the symmetries of bi-Cayley graphs received considerable attention. For various results and constructions in connection with bi-Cayley graphs and their automorphisms, we refer the reader to \cite{AHK,AKKMM,CEP,EP,KK,LWX1,ZF,ZF1} and all the references therein. In particular, Est\'elyi and Pisanski~\cite{EP} initiated the investigation for the relationship between Cayley graphs and Haar graphs. A Cayley graph is a Haar graph exactly when it is bipartite, but no simple condition is known for a Haar graph to be a Cayley graph.
An elementary argument shows that every Haar graph of abelian groups is a Cayley graph (%also this
this also follows from Proposition~\ref{ZF}).  On the other hand, Lu et al.~\cite{LWX} constructed cubic semi-symmetric graphs, that is, edge- but not vertex-transitive graphs, as Haar graphs of alternating groups.
Clearly, as these graphs are not vertex-transitive, they are examples of Haar graphs which are not Cayley graphs.
It is natural to ask which non-abelian groups admit a Haar graph that is not a Cayley graph, or putting it another way, we have the following problem, which was first posed by Est\'elyi and Pisanski~\cite[Problem~1]{EP}.

\begin{prob}\label{EP1}
{\rm (\cite{EP})} Determine the finite non-abelian groups $H$ for which all Haar graphs  $\H(H,S)$ are Cayley graphs.
\end{prob}

We denote by $\Z_n$ the cyclic group of order $n$, by $D_{2n}$ the dihedral group of order $2n,$ and by $Q_8$ the quaternion group.
Est\'elyi and Pisanski~\cite[Theorem~8]{EP} solved Problem~\ref{EP1} for dihedral
groups.

\begin{prop}{\rm (\cite{EP})}\label{Prop=1}
Each Haar graph of the dihedral group $D_{2n}$ is a Cayley graph if and only if $n=2,3,4,5$.
\end{prop}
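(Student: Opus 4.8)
The plan is to translate both directions into a single question about part-swapping automorphisms. Write $D_{2n}=\sg{a,b\mid a^n=b^2=1,\ bab=a^{-1}}$ and let $\G=\H(D_{2n},S)$ with partite sets $H_0,H_1$. Right multiplication $R(g)\colon h_i\mapsto(hg)_i$ gives a semiregular subgroup $\hat H\cong D_{2n}$ of $\Aut(\G)$ whose orbits are exactly $H_0$ and $H_1$. First I would record the elementary Haar-graph isomorphisms $\H(H,S)\cong\H(H,gS)\cong\H(H,\a(S))\cong\H(H,S^{-1})$ for $g\in H$ and $\a\in\Aut(H)$ (the first by left-translating the left part, the last by swapping the two parts); these let me assume $1\in S$ and reduce $S$ modulo $\Aut(D_{2n})$ and inversion, leaving only a short list of candidate sets for each valency $|S|$.

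The structural pivot is the following. A connected bipartite graph has a unique bipartition, so any regular subgroup $G\le\Aut(\G)$ of order $4n$ must interchange $H_0$ and $H_1$; its partition-preserving subgroup $G^+$ then has index $2$ and acts regularly on each part. Conversely, if some automorphism $\s$ swaps $H_0\leftrightarrow H_1$ and normalises $\hat H$, then $\sg{\hat H,\s}=\hat H\rtimes\sg{\s}$ has order $4n$ and is transitive, hence regular, and $\G$ is a Cayley graph. A direct computation shows that such a $\s$ normalising $\hat H$ must have the form $h_0\mapsto(c\,\theta(h))_1$, $h_1\mapsto(c'\,\theta(h))_0$ for some $\theta\in\Aut(D_{2n})$ and $c,c'\in D_{2n}$, and that it is a graph automorphism exactly when $S=c\,\theta(S^{-1})\,c'^{-1}$. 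Thus, once $\Aut(\G)$ is understood, the Cayley property reduces to whether $S$ and $S^{-1}$ lie in one orbit under these transformations.

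For the \emph{if} direction I would dispose of $n\in\{2,3,4,5\}$ directly. Here $D_4$ $(n=2)$ is abelian, so every Haar graph is a Cayley graph by the abelian case. For $n=3,4,5$ the groups $D_6,D_8,D_{10}$ have order at most $10$, so after the reduction above only finitely many sets $S$ remain for each valency; for each I would exhibit an explicit $\theta\in\Aut(D_{2n})$ and $c,c'$ realising $S=c\,\theta(S^{-1})\,c'^{-1}$, which by the pivot produces a part-swap normalising $\hat H$ and hence makes $\G$ a Cayley graph. This is a finite verification, also readily confirmed by computer.

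For the \emph{only if} direction I would, for each $n\ge6$, exhibit one explicit connected Haar graph $\G=\H(D_{2n},S)$ with $S$ a small generating set and prove it is not a Cayley graph. The heart is to compute $A=\Aut(\G)$ and show that, for this $S$ and $n\ge6$, the subgroup $\hat H$ is normal in $A$ with a small vertex stabiliser and is, up to conjugacy, the only regular-on-each-part subgroup of order $2n$. Given this, any regular $G$ of order $4n$ has $G^+$ conjugate to $\hat H$, so after conjugation $G=\hat H\rtimes\sg{\s}$ with $\s$ a part-swap normalising $\hat H$, forcing $S=c\,\theta(S^{-1})\,c'^{-1}$ for some $\theta,c,c'$; I would then check that no such data exist once $n\ge6$ — precisely the computation that succeeds for $n\le5$ — giving a contradiction. (Alternatively one might choose $S$ so that $\G$ already fails to be vertex-transitive, i.e.\ admits no part-swap at all, which gives non-Cayleyness immediately.) The main obstacle is the automorphism-group determination: establishing $\hat H\trianglelefteq A$ and the near-uniqueness of $\hat H$ among regular-on-each-part subgroups, uniformly for all $n\ge6$, is the delicate step, and it is what pins the cut-off at $n=5$.
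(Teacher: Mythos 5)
Your proposal has a genuine gap: it is a strategy, not a proof. Note first that the paper itself gives no proof of Proposition~\ref{Prop=1} --- it is quoted from \cite{EP}, where the argument constructs, for every $n\ge 6$, an explicit Haar graph of $D_{2n}$ that is not even vertex-transitive (hence not Cayley), and settles $n\le 5$ by a finite, computer-assisted verification; this is exactly the route you mention only parenthetically at the end, and it is also the technique of the paper's own Lemmas~\ref{L-2} and~\ref{L-1} (counting $4$-cycles through edges at $1_0$ to pin down stabilisers and exclude any part-swapping automorphism). Your primary route, by contrast, defers every substantive step: you never exhibit the set $S$ for $n\ge 6$; you never determine $\Aut(\Gamma)$; and the statement you yourself call ``the delicate step'' --- that $\hat H\trianglelefteq\Aut(\Gamma)$ and that $\hat H$ is, up to conjugacy, the unique subgroup regular on each part --- is precisely the mathematical content of the hard direction. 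Without it, the reduction of Cayleyness to solvability of $S=c\,\theta(S^{-1})\,c'^{-1}$ is unjustified, and nothing is proved; likewise the ``if'' direction for $D_6$, $D_8$, $D_{10}$ is an asserted but unperformed finite check.

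There is also a soundness error in your ``structural pivot''. If $\sigma$ swaps the two parts and normalises $\hat H$, the group $\sg{\hat H,\sigma}$ has order $4n$ only when $\sigma^2\in\hat H$; otherwise it is transitive but strictly larger, and a transitive group need not contain a regular subgroup. This is why Proposition~\ref{ZF} of the paper asserts only that $\sg{R(H),\d_{\a,x,y}}$ is \emph{transitive}, not regular, for $\d_{\a,x,y}\in{\rm I}$; your criterion therefore needs the extra normalisation $\sigma^2\in\hat H$ before ``transitive of order $4n$, hence regular'' is available. Relatedly, in the ``if'' direction your equation $S=c\,\theta(S^{-1})\,c'^{-1}$ is only a \emph{sufficient} condition: had your finite search failed to produce $(\theta,c,c')$ for some $S$, you could conclude nothing, since a Haar graph of $D_6$, $D_8$ or $D_{10}$ could in principle be Cayley via a regular subgroup unrelated to $\hat H$; so the small cases must be checked for Cayleyness itself, as in \cite{EP}. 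The non-vertex-transitivity route avoids both difficulties at once --- no uniqueness statement and no part-swap normalisation are needed, because a Cayley graph is vertex-transitive --- which is why it, rather than a full automorphism-group analysis of regular subgroups, is what actually pins the cut-off at $n=5$.
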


A group $H$ is called {\em inner abelian} if $H$ is non-abelian, and all proper subgroups of $H$ are abelian. Recently, Feng et al.~\cite[Theorem~1.2]{FKY} solved Problem~\ref{EP1} for the class of inner abelian groups. %, as follows.

\begin{prop}{\rm (\cite{FKY})}\label{Prop=Inner abelian group}
Each Haar graph of an inner abelian group $H$ is a Cayley graph if and only if $H\cong D_6$,
$D_8$, $D_{10}$ or $Q_8$.
\end{prop}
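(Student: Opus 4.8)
The plan is to run the argument through the classification of inner abelian (equivalently, minimal non-abelian) groups due to Miller and Moreno: apart from $Q_8$, such a group is either a two-generated $p$-group $H$ with $|H'|=p$ and $H/Z(H)\cong\Z_p\times\Z_p$, or a non-nilpotent group $V\rtimes\sg{b}$ in which $V$ is an elementary abelian minimal normal subgroup and $\sg{b}$ is cyclic of prime-power order acting irreducibly on $V$. Among all of these, the dihedral groups that occur are exactly $D_8$ (a $2$-group of the first $p$-group type) and $D_{2p}=\Z_p\rtimes\Z_2$ for primes $p$; for those, Proposition~\ref{Prop=1} already decides the question, leaving $D_6,D_8,D_{10}$ on the ``only Cayley'' side and producing a non-Cayley Haar graph for every $D_{2p}$ with $p\geq 7$. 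Thus the proposition reduces to two tasks: proving that every Haar graph of $Q_8$ is a Cayley graph, and exhibiting, for each of the remaining (non-dihedral) inner abelian groups, a single set $S$ with $\H(H,S)$ not a Cayley graph.

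For the first task I would isolate from the normalizer description of $R(H)$ in Proposition~\ref{ZF} the following sufficient condition. If there exist $\alpha\in\Aut(H)$ and $z\in Z(H)$ with $(S^{-1})^{\alpha}=zS$, then the part-interchanging map $\tau\colon h_0\mapsto (h^{\alpha})_1,\ h_1\mapsto (zh^{\alpha})_0$ is a graph automorphism; if moreover $\alpha$ is an involution fixing $z$, then $\langle R(H),\tau\rangle$ has order $2|H|$ and acts regularly on the vertex set, so $\H(H,S)$ is a Cayley graph. For $H=Q_8$ one has $Z(H)=\{\pm 1\}$, every automorphism fixes the unique involution $-1$, and $\Aut(Q_8)\cong S_4$ supplies an abundance of involutory automorphisms permuting the three generator-pairs; a short case analysis over the $\Aut(Q_8)$-orbits of subsets $S$ then always yields an admissible pair $(\alpha,z)$, so every Haar graph of $Q_8$ is Cayley. (As $|Q_8|=8$, this step can alternatively be settled by direct inspection of the finitely many graphs $\H(Q_8,S)$.)

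For the second task I would, for each remaining family, write down an explicit $S$ built from the defining generators and argue non-Cayleyness by controlling the automorphism group. Since $R(H)$ acts regularly on each part $H_0,H_1$ and the neighbourhood of $h_0$ is $(Sh)_1$ while that of $h_1$ is $(S^{-1}h)_0$, one first chooses $S$ generating $H$ (so $\H(H,S)$ is connected) and asymmetric enough that the two sides are genuinely different, forcing $R(H)$ to be normal in $A:=\Aut(\H(H,S))$; Proposition~\ref{ZF} then gives $A^{+}:=R(H)\rtimes A_1$ for an explicitly determined group $A_1$ of part-preserving symmetries. Using normality, any regular subgroup $G\leq A$ has even part $G^{+}=G\cap A^{+}$ of order $|H|$ and regular on each part, and $G$ is obtained from $G^{+}$ by a single part-swap in $A$; tracing this through $A_1$ reduces the existence of $G$ to the solvability of the equation $(S^{-1})^{\alpha}=zS$ of the previous paragraph (with $\alpha$ ranging over the automorphisms realised on $R(H)$). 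The construction then arranges $A_1$ to be as small as possible and $S$ so that this equation has no solution $(\alpha,z)\in\Aut(H)\times Z(H)$, whence there is no regular subgroup and $\H(H,S)$ is non-Cayley; in the simplest subcases $A^{+}=R(H)$, no part-swap exists at all, and the graph fails even to be vertex-transitive.

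The main obstacle is precisely the step that pins down $A$, that is, proving $R(H)\trianglelefteq A$ and bounding $A_1$. A priori $A$ could be far larger than $N_A(R(H))$ and could harbour a regular subgroup whose even part bears no relation to $R(H)$ -- which is exactly what forces $Q_8$, $D_6$, $D_8$, $D_{10}$ onto the exceptional list. Securing normality is therefore a rigidity (graphical-regular-representation-type) problem: it requires a local analysis of $\H(H,S)$, bounding the vertex stabiliser $A_{1_0}$ via the structure of the second neighbourhoods and the cycle counts, and showing that the chosen $S$ cannot be mapped to a translate of itself by any automorphism outside the expected group. This analysis must be carried out separately for the two $p$-group types and for the non-nilpotent family $V\rtimes\sg{b}$, but in each case $S$ depends only on the defining generators, so a finite computation covers the whole infinite family. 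Once $A$ is controlled, the reduction to $(S^{-1})^{\alpha}=zS$ and the verification that this equation has no solution are routine manipulations with the defining relations of $H$.
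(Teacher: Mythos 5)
First, a point of context: this paper does not prove Proposition~\ref{Prop=Inner abelian group} at all; it is imported verbatim from \cite{FKY} (Theorem~1.2 there), so the only proofs one can compare against are the one in \cite{FKY} and the analogous arguments this paper carries out for its own Theorem~\ref{1} in Sections~3--4. Measured against those, your overall strategy is the right one and is essentially the same: invoke the Miller--Moreno/R\'edei classification of inner abelian groups, dispose of the dihedral members $D_8$ and $D_{2p}$ by Proposition~\ref{Prop=1}, settle $Q_8$ by a finite check (your sufficient condition for Cayleyness --- $(S^{-1})^{\alpha}=zS$ with $\alpha$ an involution in $\Aut(H)$ fixing $z\in Z(H)$, giving $\delta_{\alpha,1,z}^2=R(z)$ and hence a regular subgroup $R(H)\sg{\delta_{\alpha,1,z}}$ of order $2|H|$ --- is a correct extraction from Proposition~\ref{ZF}), and then exhibit a non-Cayley Haar graph for every remaining group.

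The genuine gap is that the entire ``only if'' direction for the non-dihedral, non-$Q_8$ families is promised rather than proved. You never write down a set $S$ for the $p$-group families $\sg{a,b\mid a^{p^n}=b^{p^m}=1,[a,b]=a^{p^{n-1}}}$, $\sg{a,b,c\mid a^{p^n}=b^{p^m}=c^p=1,[a,b]=c,[a,c]=[b,c]=1}$ or for the non-nilpotent family $V\rtimes\sg{b}$, and you never carry out the stabilizer analysis; you only assert that ``a finite computation covers the whole infinite family.'' That assertion is exactly the hard content: in this paper's Lemmas~\ref{L-2} and~\ref{L-1} (and in the corresponding lemmas of \cite{FKY}), proving $A_{1_0}=1$ for a single well-chosen $S$ requires a delicate count of $4$-cycles through $1_0$, propagation of stabilizer equalities along translates $A_{1_0}^{R(y)}$, and characteristic-subgroup arguments to rule out every candidate $\delta_{\beta,x,y}$ --- and this must be made uniform in the parameters ($n\geq 3$, all odd $p$), which is where all the work lies. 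A secondary inaccuracy: your reduction of the existence of an arbitrary regular subgroup $G\leq A$ to solvability of $(S^{-1})^{\alpha}=zS$ with $z\in Z(H)$ is not valid in general, since $G^{+}=G\cap A^{+}$ need not be $R(H)$ when $A_1\neq 1$, and a part-swapping $\delta_{\beta,x,y}$ witnessing vertex-transitivity only needs $S^{\beta}=y^{-1}S^{-1}x$ with arbitrary $x,y\in H$. The reduction becomes correct only after one has already proved $A_{1_0}=1$ (so that $A^{+}=R(H)$ and Cayley $\Leftrightarrow$ vertex-transitive $\Leftrightarrow$ ${\rm I}\neq\emptyset$), which again is the step you left open.
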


In this paper we solve Problem~\ref{EP1} completely.

\begin{thm}\label{1}
Let $H$ be a non-abelian group with the property that every Haar graph of $H$ is a Cayley graph. Then $H$ is isomorphic to $D_{6}$, $D_8$, $D_{10}$, $Q_8$ or $Q_8\times\Z_2$.
\end{thm}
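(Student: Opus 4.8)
The plan is to prove the contrapositive: every non-abelian group $H$ that is not isomorphic to one of $D_6,D_8,D_{10},Q_8,Q_8\times\Z_2$ admits a Haar graph which is not a Cayley graph. Throughout I will use the following description of when a Haar graph is Cayley, which is implicit in Proposition~\ref{ZF}. Writing $\hat H\le\Aut(\H(H,S))$ for the semiregular group of right translations, a connected bipartite graph with parts of equal size is a Cayley graph precisely when its automorphism group contains a subgroup $M$ of order $|H|$ that is semiregular with the two parts as its orbits, together with an involution $\tau$ swapping the parts and normalizing $M$; then $M\sg{\tau}$ is regular. For abelian $H$ the inversion map $h\mapsto h^{-1}$ is an automorphism sending $S$ to $S^{-1}$, which supplies such a $\tau$ with $M=\hat H$ — this is exactly why all Haar graphs of abelian groups are Cayley. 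For non-abelian $H$ inversion is unavailable, and the strategy is to choose $S$ so that (i) $\hat H$ is normal in $\Aut(\H(H,S))$, whence $\hat H$ is the only candidate for $M$, and (ii) no part-swapping automorphism normalizing $\hat H$ is an involution yielding a regular group. By Proposition~\ref{ZF} the part-swaps normalizing $\hat H$ correspond to pairs $(\sigma,g)$ with $\sigma\in\Aut(H)$ and $gS^\sigma=S^{-1}$, so (ii) becomes an arithmetic condition on $H$ and $S$ that fails for suitable $S$ whenever $H$ is non-abelian and large enough.

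The engine of the reduction is the following elementary observation, which I will state as a lemma. If $K\le H$ and $S\subseteq K$, then in $\H(H,S)$ a vertex $h_0$ is joined only to vertices $(sh)_1$ with $sh\in Kh$, so the graph decomposes over the right cosets of $K$; each piece is isomorphic to $\H(K,S)$, and hence $\H(H,S)\cong [H:K]\cdot\H(K,S)$ is a disjoint union of copies of $\H(K,S)$. Now every connected component of a Cayley graph $\Cay(G,T)$ is isomorphic to $\Cay(\sg{T},T)$ and is therefore itself a Cayley graph; consequently a graph possessing a non-Cayley component cannot be a Cayley graph. Combining the two facts: \emph{if a subgroup $K$ of $H$ admits a non-Cayley Haar graph, then so does $H$}. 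In other words, the class of groups all of whose Haar graphs are Cayley is closed under taking subgroups.

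With this closure property in hand I argue by induction on $|H|$. Let $H$ be non-abelian with all Haar graphs Cayley. Every non-abelian subgroup of $H$ again has all Haar graphs Cayley, so by induction every \emph{proper} non-abelian subgroup of $H$ lies in $\{D_6,D_8,D_{10},Q_8,Q_8\times\Z_2\}$. If $H$ is inner abelian it has no such subgroup and Proposition~\ref{Prop=Inner abelian group} gives $H\in\{D_6,D_8,D_{10},Q_8\}$; if $H$ is dihedral Proposition~\ref{Prop=1} gives $H\in\{D_6,D_8,D_{10}\}$. It therefore remains to treat groups $H$ that are neither inner abelian nor dihedral and whose proper non-abelian subgroups all have order in $\{6,8,10,16\}$ and lie in the above list. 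This is a strong structural restriction: such an $H$ contains one of $D_6\cong S_3$, $D_8$, $Q_8$, $D_{10}$ as a proper subgroup, which constrains its order, its Sylow subgroups, and its element orders. A finite case analysis then shows that, apart from $Q_8\times\Z_2$ itself, every such $H$ is one of a short list of explicit groups (2-groups containing $Q_8$ or $D_8$, and extensions involving $S_3$ or $D_{10}$).

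For each of these finitely many surviving groups $H\not\cong Q_8\times\Z_2$ the task is to exhibit a single connection set $S$ making $\H(H,S)$ non-Cayley, which I do using the criterion above: choose $S$ generating $H$ so that $\H(H,S)$ is connected and $\hat H$ is normal in its automorphism group, and then verify that no pair $(\sigma,g)$ with $\sigma\in\Aut(H)$ and $gS^\sigma=S^{-1}$ produces an involutory part-swap, so that no regular group arises and $\H(H,S)$ is not Cayley. This contradicts $H$ being good and completes the induction, leaving only the five groups of the statement. I expect the main obstacle to be exactly this last step: simultaneously forcing the normality of $\hat H$ (so that $\hat H$ is the unique semiregular subgroup one must consider) while destroying every involutory part-swap requires a careful, group-by-group choice of $S$, and the delicate point is to see why the very same strategy \emph{fails} for $Q_8\times\Z_2$ — that is, why every connection set of $Q_8\times\Z_2$ admits a suitable involution — which is precisely what makes it the one sporadic exception beyond the inner abelian and dihedral families.
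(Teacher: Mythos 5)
Your reduction breaks down at the claim that after the induction only ``a short list of explicit groups'' survives, so that ``a finite case analysis'' can finish the proof. The surviving class --- non-abelian groups that are neither inner abelian nor dihedral and all of whose proper non-abelian subgroups lie in $\{D_6,D_8,D_{10},Q_8,Q_8\times\Z_2\}$ --- is \emph{infinite}. For every odd prime $p$ the group $Q_8\times\Z_p$ belongs to it: since $|Q_8|$ and $p$ are coprime, every subgroup of $Q_8\times\Z_p$ has the form $A\times B$ with $A\leq Q_8$ and $B\leq \Z_p$, so its unique proper non-abelian subgroup is $Q_8$. The same holds for $D_6\times\Z_p$ ($p>3$), $D_8\times\Z_p$ and $D_{10}\times\Z_p$ ($p\neq 5$), besides sporadic members such as $F_{20}$, $Q_8\rtimes\Z_3$ and $\Z_3^2\rtimes\Z_2$. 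Hence no finite check can complete your induction: one must construct non-Cayley Haar graphs for infinite families of groups, and that is precisely the technical core of the paper which your proposal defers. Lemma~\ref{L-2} proves $\Aut(\H(D_{2n}\times\Z_p,S))=R(H)$ for $S=\{1,a,b,c,abc\}$, and Lemma~\ref{L-1} proves $\Aut(\H(Q_8\times\Z_p,S))=R(H)$ for $S=\{1,a,c,abc^{-1},bc\}$, each by a delicate count of $4$-cycles; the induction is then organized not by listing proper non-abelian subgroups but by solvability and Hall/Sylow structure (Proposition~\ref{Prop=2}, Lemmas~\ref{lem=2-group} and~\ref{lem=2,p-group}), which is what actually disposes of the infinitely many residual groups. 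Your subgroup-closure lemma is correct, but it is exactly Proposition~\ref{L1} of the paper and by itself does not carry the induction.

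Separately, the criterion by which you propose to certify non-Cayleyness is unsound in the direction you need. If a connected bipartite graph is Cayley via a regular subgroup $G$, then the part-preserving subgroup $M=G^{+}$ has index $2$ and every $\tau\in G\setminus M$ swaps the parts with $\tau^{2}\in M$, but $\tau$ need not be an involution: for instance, $\Z_4$ acts regularly on the $4$-cycle $\H(\Z_2,\Z_2)$ and contains no involution outside its part-preserving subgroup. So excluding \emph{involutory} part-swaps does not exclude regular subgroups. Likewise, normality of $R(H)$ in the full automorphism group does not make $R(H)$ ``the only candidate for $M$'': the part-preserving half of a regular subgroup is merely some semiregular group of order $|H|$ whose orbits are the two parts, and it need not equal, or even be isomorphic to, $R(H)$. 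The paper avoids both traps by proving the stronger statement $\Aut(\H(H,S))=R(H)$; then the graph has two vertex-orbits, is not vertex-transitive, and a fortiori is not a Cayley graph, and only at that point does the condition $S^{\beta}=y^{-1}S^{-1}x$ of Proposition~\ref{ZF} (your condition (ii)) become a legitimate and decisive obstruction. To repair your argument you would need to control \emph{all} semiregular subgroups of the automorphism group, which in practice means computing that group exactly --- i.e., redoing the paper's Lemmas~\ref{L-2}, \ref{L-1} and~\ref{smallorders}.
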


The main idea of the proof of Theorem~\ref{1} is to construct non-Cayley Haar graphs.
It is worth mentioning that all non-Cayley Haar graphs of non-abelian groups, constructed in~\cite{EP,FKY} and this paper,
are not vertex-transitive. It seems difficulty to construct vertex-transitive non-Cayley Haar graphs. Est\'elyi and Pisanski~\cite{EP} raised a question
whether there exists a vertex-transitive non-Cayley Haar graph. Later, infinitely many vertex-transitive non-Cayley Haar graphs were constructed by Conder et al.~\cite{CEP}
and Feng et al.~\cite{FKMY}, and this prompts us to consider the following problem.

\begin{prob}\label{P}
Determine the finite non-abelian groups $H$ for which all vertex-transitive Haar graphs  $\H(H,S)$ are Cayley graphs.
\end{prob}

Note that Problem~\ref{P} is closely related to the so called non-Cayley numbers. A positive integer $n$ is called a {\em Cayley number} if every vertex-transitive graph of order $n$ is a Cayley graph, and otherwise it is a {\em non-Cayley number}. In 1983, Maru\v si\v c~\cite{M} posed the problem of determining Cayley numbers,
and this question has generated a fair amount of interests.
For some works about Cayley numbers and vertex-transitive non-Cayley graphs, one may refer to~\cite{DS,LS,ZF2}. %for example.

By a {\em graphical regular representation} (GRR for short) for a group $H$ we mean a Cayley graph $\G$ of $H$ such that $\Aut(\G)\cong H$.
When studying a Cayley graph $\G$ of a finite group $H$, a very important question is to determine whether $H$ is in fact the full automorphism group of $\G$.
For this reason, GRRs have been widely studied. The most natural question is classifying finite groups admitting a GRR, and the solution was derived in several papers (see, for instance, \cite{B,Gd,I,I1,IW,NW,NW1,Wa}).
A bi-Cayley graph $\Sigma$ of a group $H$ is called a {\em bi-graphical regular representation} (bi-GRR for short) if $\Aut(\Sigma)\cong H$. The problem of classifying finite groups admitting a bi-GRR was posed by Zhou~\cite{Z} (also see~\cite{HKM}), and it was solved by Du et al.~\cite{DFP} recently.
Motivated by GRR and bi-GRR, a {\em GHRR} of a group $H$ is a Haar graph $\G$ of $H$ with $\Aut(\G)\cong H$. Since
every Haar graph of abelian groups is a Cayley graph, abelian groups have no GHRR.
However, many non-abelian groups have GHRRs,
for example, see \cite{EP,FKY} and Section~3 of this paper.
Moreover, Theorem~\ref{1} implies that the non-abelian groups $D_6$, $D_8$, $D_{10}$, $Q_8$ and $Q_8\times\Z_2$ have no GHRRs, and
to the best of our knowledge, they are the only known non-abelian groups that have no GHRRs. In the end of this section, we would like to pose the following problem.

\begin{prob}
Determine the
finite non-abelian groups that have no GHRRs.
\end{prob}

The rest of the paper is organized as follows. In the next section we collect all concepts and results that will be used later. In Section 3, we introduce some Haar graphs that are not vertex-transitive, and prove Theorem~\ref{1} in Section~4.

\section{Preliminaries}

 For a graph $\G$, we denote by
$V(\G),$ $E(\G)$ and $\Aut(\G)$ the vertex set, the edge set and the group of all automorphisms of $\G$. Given a vertex $v \in V(\G),$ we
denote by $\G(v)$ the set of vertices adjacent to $v$.
For a subgroup $G$ of $\Aut(\G),$ denote by $G_v$ the stabilizer of the vertex $v$ in $G$, that is, the subgroup of $G$ fixing $v$.
We say that $G$ is {\em semiregular} on
$V(\G)$ if $G_{v}=1$ for every $v\in V(\G)$, and {\em regular} if $G$ is transitive and semiregular.

Let $\G=\H(H,S)$ be a Haar graph of a group $H$ with
identity element $1$. By \cite[Lemma~3.1(2)]{ZF}, up to graph isomorphism, we may always assume that $1\in S$. The graph
$\G$ is then connected exactly when $H=\sg{S}$.
For $g \in H$, the {\em right translation} $R(g)$ is the permutation
of $H$ defined by $R(g) : h \mapsto hg$ for $h \in H,$ and the
{\em left translation} $L(g)$ is
the permutation of $H$ defined by
$L(g) : h \mapsto g^{-1}h$ for $h \in H$.
Set $R(H)=\{ R(h) \, : \, h \in H\}$. Recall that $V(\G)=H_0\cup H_1$. It is easy to see that $R(H)$ can be regarded as a group of automorphisms of $\H(H,S)$ acting on $V(\G)$ by the rule
$$
R(g):\ h_i\mapsto (hg)_i,\ \forall i\in \{0,1\},~\forall h,g\in H.
$$
Furthermore, $R(H)$ acts semiregularly on $V(\G)$ with two orbits $H_0$ and $H_1$.

For an automorphism $\a \in \Aut(H)$ and $x, y, g \in H$,
define two permutations on $V(\G)=H_0 \cup H_1$ as follows
\begin{eqnarray}
& \d_{\a,x,y}:\ h_0\mapsto (xh^\a)_1,~h_1\mapsto (yh^\a)_0\label{Eq-delta},& \forall h\in H;  \\
&  \s_{\a,g}:\ h_0\mapsto (h^\a)_0,~h_1\mapsto (gh^\a)_1, & \forall h\in H. \label{Eq-sigma}
\end{eqnarray}
Set
\begin{eqnarray*}
{\rm I} &=& \{\d_{\a,x,y} : \a \in\Aut(H),~S^\a=y^{-1}S^{-1}x\},
\label{Eq-d}
\\
{\rm F} &=& \{\s_{\a,g} : \a \in \Aut(H),~S^\a=g^{-1}S \}.
\label{Eq-s}
\end{eqnarray*}
By~\cite[Lemma~3.3]{ZF}, ${\rm F}\leq \Aut(\G)_{1_0}$, and if $\G$ is connected, then ${\rm F}$ acts on the set $\G(1_0)$ consisting of all neighbours of $1_0$ faithfully. By~\cite[Theorem~1.1 and Lemma~3.2]{ZF}, we have the following proposition.

\begin{prop}\label{ZF}
Let $\G=\H(H,S)$ be a connected Haar graph, and let $A=\Aut(\G)$.
\begin{enumerate}[\rm (i)]
\item If ${\rm I}=\emptyset,$ then the normalizer
$N_{A}(R(H))=R(H)\rtimes {\rm F}$.
\item If ${\rm I}\neq \emptyset$,
then $N_{A}(R(H))=R(H)\sg{{\rm F},\d_{\a,x,y}}$
for some $\d_{\alpha,x,y}\in {\rm I}$.
\end{enumerate}
Moreover, $\sg{R(H),\d_{\a,x,y}}$ acts transitively on $V(\G)$ for any
$\d_{\a,x,y}\in {\rm I}$.
\end{prop}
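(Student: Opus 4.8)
The plan is to determine $N_{A}(R(H))$ by splitting its elements according to how they act on the bipartition $\{H_0,H_1\}$ of $\G$. First I would record that, since $R=L=\emptyset$, all edges of $\G$ are spokes, so $\G$ is bipartite with parts $H_0$ and $H_1$; as $\G$ is connected this bipartition is unique, whence every automorphism of $\G$ either preserves each of $H_0,H_1$ or interchanges them. I would also note that $R(H)\leq N_{A}(R(H))$ acts regularly on each part, so $N_{A}(R(H))$ is transitive on $H_0$ and on $H_1$, and a normalizing element preserves the parts if and only if it preserves $H_0$.

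Next I would pin down the part-preserving elements. Let $\phi\in N_{A}(R(H))$ preserve the parts; conjugation by $\phi$ induces an automorphism $\a$ of $R(H)\cong H$, say $\phi R(h)\phi^{-1}=R(h^\a)$. After composing with a suitable element of $R(H)$ (which is regular on $H_0$) I may assume $\phi$ fixes $1_0$, and then the relation $\phi R(h)=R(h^\a)\phi$ forces $\phi(h_0)=(h^\a)_0$ and $\phi(h_1)=(gh^\a)_1$, where $g_1:=\phi(1_1)$; that is, $\phi=\s_{\a,g}$. The requirement that $\phi$ send the neighbourhood $\G(1_0)=\{s_1:s\in S\}$ onto itself then translates into $gS^\a=S$, i.e. $S^\a=g^{-1}S$, which is exactly the membership condition for ${\rm F}$. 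Because $\G$ is connected and $\phi$ normalizes $R(H)$, preserving the single star at $1_0$ guarantees that all edges are preserved, so conversely every such $\s_{\a,g}$ is a graph automorphism. Hence the part-preserving subgroup of $N_{A}(R(H))$ is exactly $R(H)\rtimes {\rm F}$, the product being semidirect since $R(H)$ is normal and $R(H)\cap {\rm F}=1$.

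For the part-swapping elements I would run the identical argument with $\phi$ interchanging $H_0$ and $H_1$: writing $x_1:=\phi(1_0)$ and $y_0:=\phi(1_1)$, the normalizing relation gives $\phi(h_0)=(xh^\a)_1$ and $\phi(h_1)=(yh^\a)_0$, i.e. $\phi=\d_{\a,x,y}$, and the neighbourhood condition becomes $yS^\a=S^{-1}x$, i.e. $S^\a=y^{-1}S^{-1}x$, which is precisely membership in ${\rm I}$. Thus part-swapping normalizing elements exist if and only if ${\rm I}\neq\emptyset$, and when they exist they form a single coset of the part-preserving subgroup. This yields the dichotomy: if ${\rm I}=\emptyset$ then $N_{A}(R(H))=R(H)\rtimes {\rm F}$, proving (i), while if ${\rm I}\neq\emptyset$ then fixing any $\d_{\a,x,y}\in {\rm I}$ gives $N_{A}(R(H))=(R(H)\rtimes {\rm F})\cup (R(H)\rtimes {\rm F})\d_{\a,x,y}=R(H)\sg{{\rm F},\d_{\a,x,y}}$, proving (ii). The final transitivity claim is immediate: $R(H)$ is already transitive on $H_0$, and any $\d_{\a,x,y}\in {\rm I}$ carries $H_0$ onto $H_1$, so $\sg{R(H),\d_{\a,x,y}}$ is transitive on $V(\G)=H_0\cup H_1$.

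I expect the only real work to be the two neighbourhood computations that convert edge-preservation into the set equations $S^\a=g^{-1}S$ and $S^\a=y^{-1}S^{-1}x$; the main conceptual point to get right is that normalizing $R(H)$ together with connectivity lets one reduce the global edge-preservation requirement to a condition at the single vertex $1_0$, so that no separate verification at the remaining vertices is needed.
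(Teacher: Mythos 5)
Your proof is correct, but note that the paper does not actually prove this proposition at all: it imports it wholesale, citing Theorem~1.1 and Lemma~3.2 of Zhou--Feng~\cite{ZF}. What you have written is, in effect, a self-contained reconstruction of the argument behind that citation, and it is the right argument. Your decomposition of $N_{A}(R(H))$ into part-preserving and part-swapping elements, the identification of these with the permutations $\s_{\a,g}$ of Eq.~\eqref{Eq-sigma} and $\d_{\a,x,y}$ of Eq.~\eqref{Eq-delta} via the conjugation relation $\phi R(h)\phi^{-1}=R(h^{\a})$, and the translation of edge-preservation at $1_0$ into the set conditions $S^{\a}=g^{-1}S$ and $S^{\a}=y^{-1}S^{-1}x$ defining ${\rm F}$ and ${\rm I}$, together with the index-two coset argument, is exactly how the cited results are established; the gain of your route is that the proposition becomes independent of~\cite{ZF}, at the cost of redoing work the paper deliberately outsources. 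Two small points of precision. First, connectivity is not really what powers your converse step: for a normalizer element the dichotomy ``preserve or swap the parts'' already follows from the fact that $\phi$ permutes the two $R(H)$-orbits $H_0,H_1$, and the passage from the star at $1_0$ to all edges uses only that every edge $\{h_0,(sh)_1\}$ equals $R(h)$ applied to $\{1_0,s_1\}$ together with the intertwining relation $\s_{\a,g}R(h)=R(h^{\a})\s_{\a,g}$ (a direct computation from the defining formulas), so no appeal to connectedness is needed there. Second, the claim $N_{A}(R(H))\cap\ker(\text{action on parts})=R(H)\rtimes{\rm F}$ tacitly requires ${\rm F}$ to be a subgroup; this is immediate once you observe that ${\rm F}$ is precisely the intersection of that kernel with the stabilizer $A_{1_0}$ (or by the composition rule $\s_{\a,g}\s_{\b,k}=\s_{\a\b,\,kg^{\b}}$), but it deserves a sentence. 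Neither point is a gap; both are easily repaired within your framework.
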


Throughout the paper we follow the notation defined in~\cite{FKY}:
$$
\BC=\big\{ H \; \text{is a finite group} : \H(H,S) \; \text{is a Cayley graph for any }S \subseteq H \big\}.
$$
The following proposition was given by \cite[Lemma~3.1]{FKY}.

\begin{prop}\label{L1}
The class $\BC$ is closed under taking subgroups.
\end{prop}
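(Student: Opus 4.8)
The plan is to reduce everything to a single structural fact about how a Haar graph breaks into connected pieces, together with a general principle that a disjoint union of copies of a connected Cayley graph is again Cayley, and conversely. Fix $H\in\BC$ and a subgroup $K\le H$; I must show that $\H(K,S)$ is a Cayley graph for every $S\subseteq K$. As permitted, I assume $1\in S$, so that $T:=\sg{S}$ is a genuine subgroup with $T\le K\le H$.

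First I would establish the decomposition. Working directly from the spoke edges $\{h_0,(xh)_1\}$ (with $x\in S$, $h\in H$), I would check that, starting from $1_0$, the reachable vertices are exactly $T_0\cup T_1$ (because $S\subseteq T$), so the connected component of $1_0$ is the induced subgraph on $T_0\cup T_1$, which is precisely $\H(T,S)$, and this is connected since $T=\sg{S}$. For a general right coset $Tg$, the right translation $R(g)$ carries the component on $T_0\cup T_1$ onto the induced subgraph on $(Tg)_0\cup(Tg)_1$ and is a graph automorphism, so every component is isomorphic to $\H(T,S)$. Hence $\H(H,S)\cong [H:T]\cdot\H(T,S)$ and likewise $\H(K,S)\cong[K:T]\cdot\H(T,S)$, where $m\cdot\Sigma$ denotes the disjoint union of $m$ copies of $\Sigma$.

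The conceptual core, and what I expect to be the main obstacle, is the following claim: if $\Sigma$ is connected and $m\cdot\Sigma$ is a Cayley graph for some $m\ge 1$, then $\Sigma$ itself is a Cayley graph. To prove it, let $G\le\Aut(m\cdot\Sigma)$ act regularly. The connected components form a $G$-invariant partition, and since $G$ is transitive on vertices it is transitive on these $m$ blocks. Let $G_0$ be the setwise stabiliser of one component $\Sigma_0$; then $|G_0|=|G|/m=|V(\Sigma_0)|$, and because $G$ is regular, any nonidentity element of $G_0$ fixes no vertex, so $G_0$ acts semiregularly, hence regularly, on $V(\Sigma_0)$. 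Restricting to $\Sigma_0$ realises $G_0$ as a regular group of automorphisms, so $\Sigma_0\cong\Sigma$ is a Cayley graph. For the converse direction I would simply note that $\Cay(G_0,R)$ replicated $m$ times is $\Cay(G_0\times\Z_m,\{(r,0):r\in R\})$.

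Putting the pieces together: since $H\in\BC$ and $S\subseteq H$, the graph $\H(H,S)\cong[H:T]\cdot\H(T,S)$ is Cayley; as $\H(T,S)$ is connected, the claim yields that $\H(T,S)$ is a Cayley graph; and then $\H(K,S)\cong[K:T]\cdot\H(T,S)$ is a disjoint union of copies of a Cayley graph, hence Cayley. Since $S\subseteq K$ was arbitrary, $K\in\BC$. The only delicate points are keeping the left/right coset bookkeeping straight in the decomposition and verifying that the block stabiliser is forced to act regularly; both become routine once the component structure is pinned down.
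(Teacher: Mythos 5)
Your proof is correct and complete. One remark on the comparison: this paper does not actually prove the proposition itself but quotes it from \cite[Lemma~3.1]{FKY}, and the argument given there follows the same route you take --- decompose $\H(H,S)$ and $\H(K,S)$ into $[H:\sg{S}]$ and $[K:\sg{S}]$ copies, respectively, of the connected Haar graph $\H(\sg{S},S)$, and pass the Cayley property through this common component. Your key claim (if $\Sigma$ is connected and $m\cdot\Sigma$ is Cayley, then $\Sigma$ is Cayley) admits a shortcut worth knowing: if $m\cdot\Sigma=\Cay(G,R)$, then the component containing the identity vertex is precisely $\Cay(\sg{R},R)$, so components of Cayley graphs are automatically Cayley; this avoids the block-stabiliser computation, although your semiregularity argument for the setwise stabiliser is also valid as written.
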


In view of~\cite[Theorem~1.3 and Corollary~4.6]{FKY}, we have the following proposition.

\begin{prop}\label{Prop=2}
Let $H$ be a group belonging to the class $\BC$. Then the following hold.
\begin{enumerate}[\rm (i)]
\item The group $H$ is solvable.
\item Each Sylow $p$-subgroup of $H$ with a prime $p\geq3$ is abelian.
\item If $H$ is non-abelian, then $H$ has a subgroup isomorphic to $D_6$,
$D_8$, $D_{10}$ or $Q_8$.
\end{enumerate}
\end{prop}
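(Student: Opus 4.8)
The plan is to prove all three parts by the contrapositive method of \emph{constructing} a Haar graph that is not a Cayley graph, exploiting two features of the set-up. First, by Proposition~\ref{L1} the class $\BC$ is closed under subgroups, so it is enough to locate, inside any group violating (i), (ii) or (iii), a single subgroup that I can show does not lie in $\BC$. Second, since $R(H)$ acts regularly on each of the two parts $H_0$ and $H_1$ of $\H(H,S)$ (see the paragraph preceding Proposition~\ref{ZF}), the graph $\H(H,S)$ is vertex-transitive if and only if some automorphism interchanges $H_0$ and $H_1$; as every Cayley graph is vertex-transitive, it suffices to choose $S$ so that no such part-swapping automorphism exists. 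Proposition~\ref{ZF} is the tool for this: if I can arrange both that $\mathrm{I}=\emptyset$ and that $R(H)\trianglelefteq A:=\Aut(\H(H,S))$ — the extreme case being a GHRR, where $A=R(H)$ — then $A=N_A(R(H))=R(H)\rtimes\mathrm{F}$ fixes the bipartition setwise, so $\H(H,S)$ is not vertex-transitive and hence not a Cayley graph.

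For part (iii), assume $H$ is non-abelian and $H\in\BC$, and choose a subgroup $K\le H$ of least order among the non-abelian subgroups. By minimality every proper subgroup of $K$ is abelian, so $K$ is inner abelian, and $K\in\BC$ by Proposition~\ref{L1}. Proposition~\ref{Prop=Inner abelian group} then yields $K\cong D_6$, $D_8$, $D_{10}$ or $Q_8$, which is exactly the subgroup required. Part (ii) follows at once: were a Sylow $p$-subgroup $P$ with $p\ge 3$ non-abelian, then $P\in\BC$ by Proposition~\ref{L1}, and applying (iii) to $P$ would give a subgroup of $P$ isomorphic to one of $D_6,D_8,D_{10},Q_8$; but $|P|$ is a power of the odd prime $p$, whereas each of these four groups has even order — a contradiction. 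Hence every Sylow $p$-subgroup of $H$ with $p\ge 3$ is abelian.

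Part (i) is the substantive one, and it cannot be deduced from (ii) or (iii): the group $A_5$ contains $D_6$ and $D_{10}$ and has abelian Sylow subgroups for every prime, yet is non-solvable, so solvability genuinely requires producing a non-Cayley Haar graph for such groups. Assuming $H\in\BC$ is non-solvable, Proposition~\ref{L1} lets me pass to a minimal non-solvable subgroup $K\in\BC$ (non-solvable, with all proper subgroups solvable). A socle/radical analysis shows that the unique non-abelian composition factor of $K$ is a \emph{minimal simple group}, so by Thompson's classification it is one of $\PSL(2,2^p)$, $\PSL(2,3^p)$, $\PSL(2,p)$, $\mathrm{Sz}(2^p)$ or $\PSL(3,3)$, and $K$ itself is one of these or a central/Frattini cover such as $\mathrm{SL}(2,5)$. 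The plan is to exhibit, for each such $K$, a connected generating set $S$ with $1\in S$ for which $\H(K,S)$ is not vertex-transitive: concretely one chooses $S$ so that the equation $S^\alpha=y^{-1}S^{-1}x$ has no solution with $\alpha\in\Aut(K)$ and $x,y\in K$ (killing the swapping maps $\delta_{\alpha,x,y}$ of $\mathrm{I}$), while simultaneously forcing $R(K)\trianglelefteq\Aut(\H(K,S))$.

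The hard part will be this last rigidity requirement. Defeating $\mathrm{I}$ is a finite and comparatively tractable task, because $\mathrm{Out}(K)$ is small and explicitly known for each minimal simple group, so the affine condition $S^\alpha=y^{-1}S^{-1}x$ can be obstructed by a sufficiently asymmetric choice of $S$. Proving $R(K)\trianglelefteq\Aut(\H(K,S))$ is harder, since it amounts to showing the chosen $S$ introduces no unforeseen symmetries; I expect this to need a description of all semiregular subgroups of order $|K|$ in the automorphism group, handled by a uniform primitivity-and-counting argument for the infinite families and by separate, possibly computer-assisted, checks for the smallest cases $A_5\cong\PSL(2,5)$, $\PSL(2,7)$, $\mathrm{SL}(2,5)$ and $\PSL(3,3)$. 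Once every minimal non-solvable group is shown to fall outside $\BC$, the contradiction with $K\in\BC$ forces $H$ to be solvable, completing the proof.
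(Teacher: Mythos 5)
Your parts (ii) and (iii) are complete and correct given the results quoted in Section~2: a non-abelian group contains a minimal non-abelian subgroup, which is inner abelian; Proposition~\ref{L1} places it in $\BC$ and Proposition~\ref{Prop=Inner abelian group} forces it to be $D_6$, $D_8$, $D_{10}$ or $Q_8$, which is (iii), and (ii) follows since all four of these groups have even order and so cannot embed in a $p$-group for odd $p$. Your observation that (i) is logically independent of (ii) and (iii) (the $A_5$ example) is also sound. For calibration: the paper itself gives no proof of this proposition at all --- it is imported from \cite{FKY} (Theorem~1.3 and Corollary~4.6) --- so the fair comparison is with what can be assembled from the cited machinery, and for (ii) and (iii) your assembly is essentially the intended one.

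The genuine gap is part (i). After correctly reducing to a minimal non-solvable subgroup $K$ and invoking Thompson's classification of minimal simple groups, your argument ceases to be a proof: no set $S$ is exhibited for any of $\mathrm{PSL}(2,2^p)$, $\mathrm{PSL}(2,3^p)$, $\mathrm{PSL}(2,p)$, $\mathrm{Sz}(2^p)$, $\mathrm{PSL}(3,3)$ or their covers; the condition ${\rm I}=\emptyset$ is never verified for a concrete $S$; and the rigidity claim $R(K)\unlhd \Aut(\H(K,S))$ --- which you yourself defer to an unspecified ``primitivity-and-counting argument'' and computer checks --- is exactly where all the difficulty lives. For scale, Lemmas~\ref{L-2} and~\ref{L-1} of this paper each need a page of ad hoc $4$-cycle analysis to prove $\Aut(\G)=R(H)$ merely for $D_{2n}\times\Z_p$ and $Q_8\times\Z_p$; doing this uniformly across infinite families of simple groups is an open-ended research program, not a deduction. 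It is also avoidable: since $\BC$ is subgroup-closed, it suffices to locate inside each minimal non-solvable group one subgroup already known to lie outside $\BC$. For instance, $A_4$ is inner abelian but absent from the list in Proposition~\ref{Prop=Inner abelian group}, so $A_4\notin\BC$, and $A_4$ sits inside $\mathrm{PSL}(2,p)$ for odd $p>3$, inside $\mathrm{PSL}(2,3^p)$ (via the subfield subgroup $\mathrm{PSL}(2,3)$), inside $\mathrm{PSL}(3,3)$ (via $S_4$), and inside $\mathrm{PSL}(2,4)\cong A_5$; the groups $\mathrm{PSL}(2,2^p)$ with $p$ odd and $\mathrm{Sz}(2^p)$ contain dihedral subgroups $D_{2n}$ with $n=2^p-1\geq 7$, excluded by Proposition~\ref{Prop=1}; and perfect Frattini covers such as $\mathrm{SL}(2,5)$ contain an inner abelian dicyclic subgroup of order $12$ (the normalizer of a Sylow $3$-subgroup), likewise excluded. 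No new Haar-graph constructions are needed, and this subgroup-hunting is in substance how \cite{FKY} establishes solvability; your plan replaces a finite case analysis by a much harder, and here unexecuted, GHRR-type rigidity problem on simple groups.
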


The following proposition is well-known, and one may see~\cite[(1.12)]{As}.

\begin{prop}\label{prop=abelian p-group}
Let $P$ be a finite abelian $p$-group.
Then $P=\Z_{p^{e_1}}\times \Z_{p^{e_2}}\times\cdots\times \Z_{p^{e_n}}$,
where $1\leq e_1\leq e_2\leq \cdots\leq e_n$. Moreover, the integers
$n$ and $e_i$ with $1\leq i\leq n$ are uniquely determined by $P$.
\end{prop}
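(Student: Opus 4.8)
The statement has two halves---existence of the decomposition and uniqueness of the data $n$ and $e_1 \le \cdots \le e_n$---and the plan is to treat them separately, writing $P$ multiplicatively. For existence I would argue by induction on $|P|$, the engine being the claim that a cyclic subgroup generated by an element of maximal order splits off as a direct factor. For uniqueness I would exhibit the $e_i$ as invariants read off from characteristic subgroups of $P$, namely the subgroups of $p^k$-th powers, so that two isomorphic groups are forced to have the same multiset of exponents.

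\emph{Existence.} If $P$ is trivial or cyclic there is nothing to prove, so assume $|P| > p$ and pick $x \in P$ of maximal order $p^e$. The key lemma I would prove is that $P = \sg{x} \times K$ for some subgroup $K$. To find $K$, note that since $P \ne \sg{x}$ the quotient $P/\sg{x}$ is a nontrivial $p$-group, so it contains an element $y\sg{x}$ of order $p$; thus $y \notin \sg{x}$ but $y^p = x^t$ for some $t$. Using maximality of $|x|$ one shows $p \mid t$: indeed $y^{p^e}=1$ forces $x^{tp^{e-1}}=(y^p)^{p^{e-1}}=1$, hence $p^e \mid tp^{e-1}$. Writing $t = ps$ and setting $z = y x^{-s}$, which is legitimate because $P$ is abelian, gives $z^p = 1$ with $z \notin \sg{x}$, so $\sg{z}$ has order $p$ and meets $\sg{x}$ trivially. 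Passing to $\bar P = P/\sg{z}$, the image of $x$ still has order $p^e$ and hence still maximal order, so by induction $\sg{x}$ splits off in $\bar P$; pulling the complement back through $\sg{z} \le K$ and checking orders and $\sg{x}\cap K = 1$ yields $P = \sg{x}\times K$. Then $K$ is a smaller abelian $p$-group, to which induction applies, producing the cyclic decomposition; arranging the factors in nondecreasing order of their orders gives the desired $e_1 \le \cdots \le e_n$. This splitting lemma is where the real work lies, and is the step I expect to be the main obstacle, since every other part is either trivial or bookkeeping.

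\emph{Uniqueness.} Suppose $P \cong \Z_{p^{e_1}}\times\cdots\times\Z_{p^{e_n}}$. For each integer $k \ge 0$ let $P^{p^k} = \{\, g^{p^k} : g \in P \,\}$, which is a characteristic subgroup because $P$ is abelian, and whose isomorphism type---and in particular its order---depends only on $P$. Under the decomposition one computes $P^{p^k} \cong \prod_{j:\, e_j > k} \Z_{p^{e_j-k}}$, so a direct comparison of orders gives $\log_p\!\big(|P^{p^k}| / |P^{p^{k+1}}|\big) = |\{\, j : e_j \ge k+1 \,\}|$ for every $k \ge 0$. The left-hand side is an invariant of $P$, hence so is the number of indices $j$ with $e_j \ge m$ for each $m \ge 1$; taking successive differences recovers, for each value $m$, the number of $e_j$ equal to $m$. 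This determines the multiset $\{e_1,\dots,e_n\}$, and the case $k=0$ gives $n = \log_p(|P|/|P^{p}|)$, so both $n$ and the $e_i$ are uniquely determined by $P$, completing the proof.
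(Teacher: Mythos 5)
Your proof is correct and complete: the splitting lemma (a cyclic subgroup generated by an element of maximal order is a direct factor, proved by passing to $P/\sg{z}$ for an order-$p$ element $z$ outside $\sg{x}$) together with induction gives existence, and comparing $\log_p\big(|P^{p^k}|/|P^{p^{k+1}}|\big)$ for all $k$ correctly recovers the multiset $\{e_1,\dots,e_n\}$. There is nothing in the paper to compare against: the authors give no proof, stating the proposition as well-known and citing (1.12) of \cite{As}; your argument is the standard textbook proof of exactly this fact. One cosmetic remark: $P^{p^k}$ is a \emph{subgroup} because $P$ is abelian, but its invariance under isomorphisms (which is what uniqueness actually uses) holds for any group, since isomorphisms preserve $p^k$-th powers.
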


\section{Haar graphs that are not vertex-transitive}

In this section, we introduce some Haar graphs that are not vertex-transitive, which
will be used in the proof of Theorem~\ref{1}. First we describe two infinite families of Haar graphs that are not vertex-transitive.

\begin{lem}\label{L-2}
Let $n$ be an integer with $n\geq3$,
and let $p$ be an odd prime. Let
$$H=D_{2n}\times\Z_p=\sg{a,b,c~|~a^n=b^2=c^p=[a,c]=[b,c]=1, a^b=a^{-1}},$$
and $S=\{1,a,b,c,abc\}$. Then $\Aut(\H(H,S))=R(H)$ and $H\not\in \BC$.
\end{lem}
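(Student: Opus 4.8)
The plan is to prove the sharper statement $\Aut(\G)=R(H)$, where $\G=\H(H,S)$; the claim $H\notin\BC$ is then immediate, because $R(H)$ is semiregular on $V(\G)$ with the two parts $H_0,H_1$ of $\G$ as its orbits, so $\Aut(\G)=R(H)$ is intransitive, $\G$ is not vertex-transitive, and hence $\G$ is not a Cayley graph. Write $A=\Aut(\G)$. Since $\sg{S}=H$, the graph $\G$ is connected and $5$-regular, $R(H)\le A$ is semiregular with orbits $H_0,H_1$, and (connectedness plus bipartiteness) every element of $A$ either preserves or interchanges $H_0$ and $H_1$; let $A^+\le A$ be the (index at most $2$) subgroup preserving the parts.

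First I would rule out part-interchanging automorphisms by a local invariant. For two neighbours $s_1,t_1$ of $1_0$ (with $s,t\in S$), the number of their common neighbours equals the number of ways of writing $ts^{-1}$ as $vu^{-1}$ with $u,v\in S$; a direct count shows that, over all pairs, this multiset of common-neighbour counts is $\{4,4,2,1,\dots,1\}$, the two $4$'s coming from the pairs $\{a,b\}$ and $\{c,abc\}$ and the $2$ from $\{1,b\}$. The analogous count at $1_1$, whose neighbours are the $(s^{-1})_0$ with $s\in S$, yields instead $\{2,\dots,2,1,1,1\}$, with no entry $4$. Thus the ``weighted neighbourhood'' of an $H_0$-vertex is not isomorphic to that of an $H_1$-vertex, so no automorphism maps an $H_0$-vertex to an $H_1$-vertex; since $R(H)$ is already transitive on each part, this gives $A=A^+$, and in particular ${\rm I}=\emptyset$. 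Writing $A=A^+=R(H)A_{1_0}$ with $R(H)\cap A_{1_0}=1$ (as $R(H)$ is regular on $H_0$), the whole proof now reduces to showing that the vertex stabiliser $A_{1_0}$ is trivial, which by Proposition~\ref{ZF} is equivalent to ${\rm F}=1$ together with $R(H)\trianglelefteq A$.

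To analyse $A_{1_0}$ I would encode $\phi\in A_{1_0}$ by the permutations $\mu,\rho$ of $H$ given by $\phi(h_0)=\mu(h)_0$ and $\phi(h_1)=\rho(h)_1$; the edge condition then becomes $\rho(Sh)=S\mu(h)$ and $\mu(S^{-1}h)=S^{-1}\rho(h)$ for all $h\in H$, with $\mu(1)=1$ and $\rho(S)=S$. The common-neighbour data above force $\phi$ to fix the neighbours $1_1,a_1,b_1$ of $1_0$ (here $b_1$ is singled out as the unique neighbour lying in both a weight-$4$ pair and the weight-$2$ pair) and to either fix or interchange $c_1$ and $(abc)_1$. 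The interchange is excluded by a clean disjointness argument: it forces $\mu(a)=a$ and $\mu(c)=abc$, so $\rho(ac)$ would have to lie in both $S\mu(a)=Sa$ and $S\mu(c)=S\,abc$; but in the direct-product coordinates these two $5$-sets are disjoint — their $\Z_p$-components are $\{0,1\}$ and $\{1,2\}$, and the elements sharing the common component $1$ have clashing $D_{2n}$-components $\{a,b\}$ versus $\{ab,a^2b,a^{-1}\}$ (using $n\ge3$, $p\ge3$) — a contradiction. Hence $A_{1_0}$ fixes $\G(1_0)$ pointwise.

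It remains to show that the pointwise stabiliser of $1_0$ and its neighbours is trivial, i.e. that $\mu$ and $\rho$ are both the identity. Here I would propagate rigidity through the connected graph: the common-neighbour bookkeeping first pins down every vertex $h_i$ with $h\in S$ and $i\in\{0,1\}$ (each is the unique ``private'' neighbour of an already-fixed vertex, via the weight-$2$ and weight-$4$ pairs), and then an induction exploiting $\rho(Sh)=S\mu(h)$ and $\sg{S}=H$ should extend fixedness to all of $V(\G)$. I expect this final propagation to be the main obstacle: in contrast to the disjointness argument that kills the transposition, distinguishing the remaining neighbours — for example $(ac)_1$ from $(bc)_1$, which have identical adjacencies to every already-fixed vertex — requires descending further into the graph and using the precise multiplication in $D_{2n}\times\Z_p$, so the bulk of the technical casework (and any dependence on $n$ and $p$) will live in this step. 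Once $A_{1_0}=1$ is established, $A=R(H)A_{1_0}=R(H)$, and the lemma follows.
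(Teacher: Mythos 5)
Your first two steps are correct, and the first one is a genuinely different (and arguably cleaner) route to non-vertex-transitivity than the paper's: I verified that the multiset of common-neighbour counts over pairs of neighbours is $\{4,4,2,1,\ldots,1\}$ at $1_0$ but $\{2,\ldots,2,1,1,1\}$ at $1_1$, so no automorphism can carry $H_0$ to $H_1$ and ${\rm I}=\emptyset$. The paper instead establishes non-transitivity only at the very end: assuming transitivity it invokes Proposition~\ref{ZF} to produce some $\d_{\b,x,y}$ with $S^{\b}=y^{-1}S^{-1}x$ and refutes its existence by a characteristic-subgroup analysis of $H$ ($\sg{c}$ and $\sg{a}$ characteristic, etc.); your local invariant makes all of that unnecessary. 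Your exclusion of the transposition $(c_1\,(abc)_1)$ is also sound, and is the same computation as the paper's observation that $a^2bc\notin S^{-1}S$.

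The genuine gap is your final step, and you have put your finger on exactly why it fails as stated: $\G((ac)_1)=\{a_0,b_0,c_0,(a^{-1}bc)_0,(ac)_0\}$ and $\G((bc)_1)=\{a_0,b_0,c_0,(a^{-1}bc)_0,(bc)_0\}$ differ only in their fifth, unfixed neighbours, so adjacency to the fixed vertices cannot separate them and a purely local distance-by-distance induction stalls immediately; ``should extend fixedness'' is therefore not a proof. The missing idea --- which is how the paper closes the argument in a few lines, with no further graph computations --- is global rather than local. Since you have already shown that the $A$-orbits are exactly $H_0$ and $H_1$, orbit--stabiliser gives $|A_v|=|A|/|H|$ for \emph{every} vertex $v$, so each containment $A_{1_0}\leq A_v$ you have proved is automatically an equality; in particular $A_{1_0}=A_{a_0}=A_{b_0}=A_{c_0}$. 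Now conjugate by right translations: for $x,y\in T=\{a,b,c\}$ we get $A_{y_0}=A_{1_0}^{R(y)}=A_{x_0}^{R(y)}=A_{(xy)_0}$, hence $A_{1_0}=A_{(xy)_0}$, and induction on word length gives $A_{1_0}=A_{w_0}$ for every word $w$ in $T$; since $\sg{T}=H$, the group $A_{1_0}$ fixes $H_0$ pointwise. Finally, conjugating the equality $A_{1_0}=A_{1_1}$ by $R(h)$ gives $A_{h_0}=A_{h_1}$ for all $h\in H$, so $A_{1_0}$ fixes $H_1$ pointwise as well, i.e.\ $A_{1_0}=1$ and $A=R(H)A_{1_0}=R(H)$. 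Without this stabiliser-equality-plus-translation trick (or some equivalent global argument) your proposal does not close, and the ``technical casework depending on $n$ and $p$'' you anticipate would indeed be substantial.
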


\begin{proof} Let $\G=\H(H,S)$ and let $A=\Aut(\G)$.
Note that $R(H)\leq A$ has exactly two orbits on
$V(\G)$. Then $A$ is vertex-transitive or has two orbits, that is, $H_0$ and $H_1$. For the former, $A_{1_0}$ and $A_{1_1}$ are conjugate in $A$, and for the latter, the Frattini argument implies that $A=R(H)A_{1_0}=R(H)A_{1_1}$. In the both cases, $|A_{1_0}|=|A_{1_1}|$, and hence $|A_{1_0}|=|A_{h_0}|=|A_{k_1}|$ for any $h,k\in H$. To finish the proof, it suffices to show that $A_{1_0}=1$ and $\G$ is not vertex-transitive.

We depicted the subgraph of $\G$ induced  by the vertices
at distance at most $2$ from $1_0$ in Figure~\ref{Fig-1}.

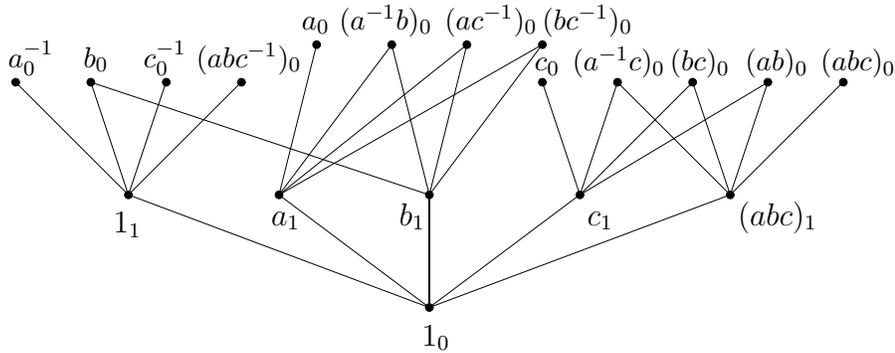
\begin{figure}[h]
\begin{center}
\unitlength 1mm
\begin{picture}(150,40)

\put(70, 0) {\circle*{1}} \put(69,-5){$1_0$}
\put(30, 15){\circle*{1}} \put(28,10){$1_1$}
\put(70,0){\line(-8, 3){40}}

\put(50, 15){\circle*{1}} \put(49,11){$a_1$}
\put(70,0){\line(-4, 3){20}}

\put(70, 15){\circle*{1}} \put(66,11){$b_1$}
\put(70,0){\line(0, 1){15}}

\put(90, 15){\circle*{1}} \put(91,11){$c_1$}
\put(70,0){\line(4, 3){20}}

\put(110, 15){\circle*{1}} \put(111,11){$(abc)_1$}
\put(70,0){\line(8, 3){40}}

\put(15,30){\circle*{1}} \put(14,32){{\small$a^{-1}_0$}}
\put(30,15){\line(-1, 1){15}}
\put(25,30){\circle*{1}} \put(24,32){{\small$b_0$}}
\put(30,15){\line(-1, 3){5}}
\put(35,30){\circle*{1}} \put(32,32){{\small$c^{-1}_0$}}
\put(30,15){\line(1, 3){5}}
\put(45,30){\circle*{1}} \put(39,32){{\small $(abc^{-1})_0$}}
\put(30,15){\line(1, 1){15}}

\put(55,35){\circle*{1}} \put(53,37){{\small$a_0$}}
\put(50,15){\line(1, 4){5}}
\put(65,35){\circle*{1}} \put(58,37){{\small $(a^{-1}b)_0$}}
\put(50,15){\line(3, 4){15}}
\put(75,35){\circle*{1}} \put(72,37){{\small $(ac^{-1})_0$}}
\put(50,15){\line(5, 4){25}}
\put(85,35){\circle*{1}} \put(85,37){{\small $(bc^{-1})_0$}}
\put(50,15){\line(7, 4){35}}

\put(70,15){\line(-3, 1){45}}  \put(70,15){\line(-1, 4){5}}
\put(70,15){\line(1, 4){5}} \put(70,15){\line(3, 4){15}}

\put(85,30){\circle*{1}} \put(84,31.5){{\small $c_0$}}
\put(90,15){\line(-1, 3){5}}
\put(95,30){\circle*{1}} \put(89,31.5){{\small $(a^{-1}c)_0$}}
\put(90,15){\line(1, 3){5}}
\put(105,30){\circle*{1}} \put(102,31.5){{\small $(bc)_0$}}
\put(90,15){\line(1, 1){15}}
\put(115,30){\circle*{1}} \put(112,31.5){{\small $(ab)_0$}}
\put(90,15){\line(5, 3){25}}

\put(125,30){\circle*{1}} \put(122,31.5){{\small $(abc)_0$}}
\put(110,15){\line(1, 1){15}} \put(110,15){\line(-1, 1){15}}
\put(110,15){\line(-1, 3){5}} \put(110,15){\line(1, 3){5}}

\end{picture}
\end{center}\vspace{-0cm}
\caption{The subgraph of $\G$ induced
by the vertices at distance at most $2$ from $1_0$.}\label{Fig-1}
\end{figure}

Consider the $4$-cycles of $\G$ passing through the vertex $1_0$. For each $h\in H$, denote by $\G(h_0)$ and $\G(h_1)$ the neighborhoods of $h_0$ and $h_1$ in $\G$ respectively, that is, $\G(h_0)=\{(sh)_1~|~s\in S\}$ and $\G(h_1)=\{(s^{-1}h)_0~|~s\in S\}$.
By Figure~\ref{Fig-1}, the numbers of $4$-cycles passing through the edges $\{1_0,1_1\}$ and $\{1_0,b_1\}$
are $1$ and $4$, respectively, while there are exactly three $4$-cycles
passing through the edge $\{1_0,u_1\}$ for each $u_1=a_1,c_1$ or $(abc)_1$. This implies that $A_{1_0}$ fixes $1_1$ and $b_1$, and $\{a_1,c_1,(abc)_1\}$ setwise. It follows that $A_{1_0}\leq A_{1_1}$
and $A_{1_0}\leq A_{b_1}$, and since there is a unique $4$-cycle passing through $1_0$ and $1_1$, we have $A_{1_0}\leq A_{b_0}$. Since $|A_{1_0}|=|A_{h_0}|=|A_{k_1}|$ for any $h,k\in H$, we have $A_{1_0}=A_{1_1}=A_{b_1}=A_{b_0}$.

By Figure~\ref{Fig-1}, there are $4$-cycles passing through $(a_1,1_0,b_1)$ but no $4$-cycles passing through
$(c_1,1_0,b_1)$ or $((abc)_1,1_0,b_1)$, and since $A_{1_0}$ fixes $b_1$ and $\{a_1,c_1,(abc)_1\}$ setwise, $A_{1_0}$ fixes $a_1$, and $\{c_1,(abc)_1\}$ setwise. Thus, $A_{1_0}$ fixes $\G(a_1)$ setwise, and since there exist $4$-cycles passing through $1_0$, $a_1$ and a vertex in $\G(a_1)$ except $a_0$, we have $A_{1_0}\leq A_{a_0}$.
It follows that $A_{1_0}=A_{a_0}=A_{a_1}$.

Now we claim that $A_{1_0}$ fixes  $c_1$ and $(abc)_1$. Note that $A_{1_0}$ fixes $\{c_1,(abc)_1\}$ setwise. Suppose that $\a\in A_{1_0}$ interchanges $c_1$ and $(abc)_1$. By Figure~\ref{Fig-1}, there exist $4$-cycles passing through $1_0$, $c_1$ (resp. $(abc)_1$) and a vertex in $\G(c_1)$ (resp. $\G((abc)_1)$) except $c_0$ (resp. $(abc)_0$), and hence  $\a$ interchanges $c_0$ and $(abc)_0$. Since $A_{1_0}$ fixes $a_0$, we have $(\G(a_0)\cap  \G(c_0))^\a=\G(a_0)\cap  \G((abc)_0)$. Clearly, $(ac)_1\in \G(a_0)\cap \G(c_0)$. Then $|\G(a_0)\cap  \G((abc)_0)|\not=0$, and hence there exist
$s,t\in S$ such that $sa=tabc$, that is, $t^{-1}s=a^2bc\in S^{-1}S$.
This is impossible as $S=\{1,a,b,c,abc\}$. Thus $A_{1_0}$ fixes $c_1$ and $(abc)_1$, and hence $c_0$ and $(abc)_0$. It follows that  $A_{1_0}=A_{c_0}=A_{c_1}$.

Now we have that $A_{1_0}=A_{x_0}$ for each $x\in T:=\{a,b,c\}$. For any $y\in T$, we have $A_{1_0}^{R(y)}=A_{x_0}^{R(y)}$, that is, $A_{y_0}=A_{(xy)_0}$. It follows that $A_{1_0}=A_{(xy)_0}$, and an easy  inductive argument implies that $A_{1_0}=A_{(x_1x_2\cdots x_n)_0}$ for any $x_1,\cdots,x_n\in T$. Since $\langle T\rangle=H$, $A_{1_0}$ fixes $H_0$ pointwise. Since $A_{1_0}=A_{1_1}$, we have $A_{h_0}=A_{h_1}$ for any $h\in H$, and hence $A_{1_0}$ fixes $H_1$ pointwise. Thus, $A_{1_0}=1$.

\medskip
To finish the proof, we are left with showing that $A$ is not vertex-transitive. Suppose to the contrary that $A$ is vertex-transitive. Since $A_{1_0}=1$, we have $|A|=|V(\G)|=2|R(H)|$ and hence
$R(H)\unlhd A$. By Proposition~\ref{ZF}, there exists $\d_{\b,x,y}\in A$
for some $\b\in \Aut(H)$ and $x,y\in H$ such that $S^{\b}=y^{-1}S^{-1}x$. Since
$R(H)$ acts transitively on $H_1,$ we may further assume that $1_0^{\d_{\b,x,y}}=1_1$. By Eq.~\eqref{Eq-delta}, $1_0^{\d_{\b,x,y}}=(x1^{\b})_1=1_1$, forcing $x=1$. Thus $S^{\b}=y^{-1}S^{-1}$, that is,
\begin{equation}\label{Eq-11}
S^\b=\{1^{\b},a^{\b},b^{\b},c^{\b},(abc)^{\b}\}=
y^{-1}\{1,a^{-1},b,c^{-1},abc^{-1}\}.
\end{equation}
Since $1\in S$, we have $1\in S^\b$ and so $y^{-1}=1,a,b^{-1}, c$ or $abc$.

Note that $H=D_{2n}\times\Z_p=\sg{a,b,c~|~a^n=b^2=c^p=[a,c]=[b,c]=1, a^b=a^{-1}}$. If $n$ is odd then the center $Z(H)=\Z_p$, and  if $n=2m$ is even then $Z(H)=\sg{a^m}\times\Z_p\cong\Z_2\times\Z_p$, where $\Z_p$ is characteristic in $\sg{a^m}\times\Z_p$. It follows that $\Z_p=\sg{c}$ is characteristic in $H$, and since $\b\in\Aut(H)$, we have $c^\b\in\sg{c}$.

If $y^{-1}=a,b^{-1}, c$ or $abc$, we have from Eq.~(\ref{Eq-11}) that $S^\b=\{a,1,ab,ac^{-1},a^2bc^{-1}\}$, $\{b, ba^{-1}$, $1$, $bc^{-1}$, $a^{-1}c^{-1}\}, \{c,ca^{-1},cb,1,ab\}$ or $\{abc,a^2bc,ac,ab,1\}$, respectively. This is impossible because $c^\b\in \sg{c}$. Thus, $y=1$ and $S^\b=
\{1,a^{-1},b,c^{-1},abc^{-1}\}$. This implies $c^{\b}=c^{-1}$ because $c^\b\in \sg{c}$. Since all involutions of $H$ generate the dihedral subgroup $\sg{a,b}$, $\sg{a,b}$ is characteristic in $H$, and since  $\sg{a,b}$ is dihedral, $\sg{a}$ is characteristic in $H$. Thus, $a^\b\in\sg{a}$ and $b^\b\in\sg{a,b}$, and since $S^\b=
\{1,a^{-1},b,c^{-1},abc^{-1}\}$, we have $a^\b=a^{-1}$, $b^\b=b$ and $(abc)^\b=abc^{-1}$. However, $abc^{-1}=(abc)^\b=a^\b b^\b c^\b=a^{-1}bc^{-1}$, that is, $a^2=a$, contrary the hypothesis $n\geq 3$. This completes the proof.
\end{proof}

\begin{lem}\label{L-1}
Let $p$ be an odd prime, and let
$$H=Q_8\times\Z_p=\sg{a,b,c~|~a^4=b^4=c^p=[a,c]=[b,c]=1, a^2=b^2,a^b=a^{-1}},$$
and $S=\{1,a,c,abc^{-1},bc\}$. Then $\Aut(\H(H,S))=R(H)$ and $H\not\in\BC$.
\end{lem}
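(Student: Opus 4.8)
The plan is to follow the argument of Lemma~\ref{L-2} in outline, since the present statement is its $Q_8$-analogue. Write $\G=\H(H,S)$ and $A=\Aut(\G)$. Exactly as there, $R(H)\le A$ has the two orbits $H_0,H_1$, so $A$ is either vertex-transitive or has precisely these two orbits; in both cases $|A_{1_0}|=|A_{h_0}|=|A_{k_1}|$ for all $h,k\in H$. It therefore suffices to establish (i) $A_{1_0}=1$ and (ii) $\G$ is not vertex-transitive, whereupon $\Aut(\G)=R(H)$ and $H\notin\BC$ follow just as in Lemma~\ref{L-2}.

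For (i) I would first record $S^{-1}=\{1,a^{-1},c^{-1},ab^{-1}c,b^{-1}c^{-1}\}$, using the $Q_8$-identities $a^{-1}b=ab^{-1}$, $a^{-1}b^{-1}=ab$ and $(ab)^{-1}=ab^{-1}$ to reduce every product to a single quaternion times a power of $c$, and then draw the ball of radius $2$ about $1_0$ as in Figure~\ref{Fig-1}. Counting the $4$-cycles through the spokes at $1_0$ one finds, in the generic picture, that $c_1$ is the \emph{unique} neighbour of $1_0$ lying on no $4$-cycle, while the remaining four neighbours fall into two pairs $\{1_1,(bc)_1\}$ and $\{a_1,(abc^{-1})_1\}$, each pair spanning exactly one of the two $4$-cycles through $1_0$, with ``apex'' vertices $(ab^{-1}c)_0$ and $(abc^{-1})_0$ respectively. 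Hence $A_{1_0}$ fixes $c_1$, stabilizes each $4$-cycle, and so fixes both apexes; the two possible transpositions within the pairs are excluded by an $S^{-1}S$-avoidance computation (the analogue of the avoidance ``$a^2bc\notin S^{-1}S$'' used in Lemma~\ref{L-2}), so $A_{1_0}$ fixes all five neighbours. Then $A_{1_0}=A_{1_1}$ fixes the neighbours $(S^{-1})_0$ of $1_1$, giving equalities $A_{1_0}=A_{x_0}$ for $x\in\{a^{-1},c^{-1},ab^{-1}c,b^{-1}c^{-1}\}$; since this set generates $H$, translating by $R(H)$ and inducting forces $A_{1_0}$ to fix $H_0$, and then $H_1$, pointwise, so $A_{1_0}=1$.

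For (ii) suppose $\G$ is vertex-transitive. Then $|A|=2|H|$ and $R(H)\unlhd A$, so Proposition~\ref{ZF} supplies some $\d_{\b,x,y}\in{\rm I}$, i.e.\ $S^\b=y^{-1}S^{-1}x$; composing with a suitable $R(h)$ we may take $1_0^{\d_{\b,x,y}}=1_1$, forcing $x=1$ and hence $S^\b=y^{-1}S^{-1}$, with $y^{-1}\in S$ because $1\in S^\b$. Now $\sg{c}\cong\Z_p$ is the characteristic Sylow $p$-subgroup and $Q_8=\sg{a,b}$ the characteristic Sylow $2$-subgroup, so $\Aut(H)=\Aut(Q_8)\times\Aut(\Z_p)$ and $\b=(\b_1,\b_2)$ with $c^\b=c^r$. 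Reducing the equation modulo $\sg{c}$, the multiset of $Q_8$-components of $S$ is $\{1,1,a,b,ab\}$, whose only component of multiplicity $2$ is $1$; matching multiplicities forces the $Q_8$-component of $y$ to be trivial, so $y^{-1}\in S\cap\sg{c}=\{1,c\}$, i.e.\ $y\in\{1,c^{-1}\}$. If $y=1$ the equation forces simultaneously $r=-1$ and $a^{\b_1}=a^{-1}$, $b^{\b_1}=b^{-1}$, $(ab)^{\b_1}=ab^{-1}$; but any automorphism obeys $(ab)^{\b_1}=a^{\b_1}b^{\b_1}=a^{-1}b^{-1}\ne ab^{-1}$, a contradiction valid for every $p$.

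The case $y=c^{-1}$ is where the real difficulty lies, and it is the step I expect to be the main obstacle. Here the equation forces $r=1$ together with the equality of $c$-exponent sets $\{0,1,-1\}=\{0,1,2\}$ in $\Z_p$, that is, $-1\equiv 2\pmod p$. For $p\ge 5$ this congruence fails, so no such $\d_{\b,x,y}$ exists, ${\rm I}=\emptyset$, and $\G$ is not vertex-transitive, completing the proof. For $p=3$, however, the congruence holds: the map $\b_1\colon a\mapsto b^{-1},\,b\mapsto a^{-1}$ is a genuine automorphism of $Q_8$ with $(ab)^{\b_1}=b^{-1}a^{-1}=ab^{-1}$, and a direct check gives $S^{\b}=c\,S^{-1}$, so $\d_{\b,1,c^{-1}}\in{\rm I}$ and $\G$ \emph{is} vertex-transitive. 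Thus the present choice of $S$ proves the claim only for $p\ge 5$, and $Q_8\times\Z_3$ must be handled by a separate connection set. The source of this exceptional behaviour is exactly that the three $c$-exponents $0,1,-1$ attached to $a,b,ab$ in $S$ are pairwise distinct mod $p$ precisely when $p\ne 3$; when $p=3$ they exhaust $\Z_3$, which simultaneously creates the extra $4$-cycles that spoil the clean link structure used in (i) and produces the extra automorphism above. Isolating this collapse at $p=3$ is the crux of the whole argument.
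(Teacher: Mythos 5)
Your strategy is the paper's own: first force $|A_{1_0}|=|A_{h_0}|=|A_{k_1}|$ for all $h,k\in H$, then kill $A_{1_0}$ through the $4$-cycle structure around $1_0$, and finally exclude vertex-transitivity via Proposition~\ref{ZF} by running through the possibilities $y^{-1}\in S$. Two remarks on the parts you treat briefly: your part~(i) is only an outline (the paper needs Table~\ref{table=1}, the four cycles $C_1,\dots,C_4$ based at $1_0$ \emph{and} at $c_1$, and two separate $S^{-1}S$-avoidance computations to rule out exactly the swaps you mention), while your multiset-matching reduction of part~(ii) to $y^{-1}\in\{1,c\}$ is a cleaner route than the paper's explicit elimination of $y^{-1}=a,\,abc^{-1},\,bc$. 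Note also that the paper runs its hand argument only for $p\ge 7$ and refers $p=3$ and $p=5$ to {\sc Magma}; as you observe, the hand argument actually covers $p=5$ as well, since the $c$-exponents $0,\pm1,\pm2$ occurring in $S^{-1}S$ are pairwise distinct mod $5$.

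The headline, however, is your claim about $p=3$, and after independent verification your claim stands, against the paper. The map $\beta$ fixing $c$ with $a^\beta=b^{-1}$, $b^\beta=a^{-1}$ does preserve the defining relations of $Q_8$ (using $ba=a^{-1}b$ one checks $ab^{-1}a^{-1}=b$), so $\beta\in\Aut(H)$, and with $(ab)^{-1}=a^{-1}b$ one computes
$$S^\beta=\{1,\,b^{-1},\,c,\,a^{-1}bc^{-1},\,a^{-1}c\},\qquad cS^{-1}=\{1,\,c,\,a^{-1}c,\,a^{-1}bc^{2},\,b^{-1}\},$$
and these two sets coincide exactly when $c^{3}=1$. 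Hence for $p=3$ we have $\delta_{\beta,1,c^{-1}}\in{\rm I}$, Proposition~\ref{ZF} makes $\G$ vertex-transitive, and since $\delta_{\beta,1,c^{-1}}^{2}=R(c^{-1})$, the group $\sg{R(H),\delta_{\beta,1,c^{-1}}}$ is regular of order $2|H|$, so $\H(Q_8\times\Z_3,S)$ is even a Cayley graph. (Concretely, $(1_0,c_1,(a^{-1}c)_0,(abc^{-1})_1)$ becomes a $4$-cycle when $p=3$, which already destroys the step ``no $4$-cycle passes through $\{1_0,c_1\}$'' in the stabilizer argument.) Thus Lemma~\ref{L-1} as stated is false at $p=3$, the reported {\sc Magma} check notwithstanding, and your proposal cannot be completed into a proof of the full statement because no such proof exists: what you have is a correct proof for $p\ge5$ together with a disproof for $p=3$. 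This is not a cosmetic matter for the paper: Case~1 of the proof of Lemma~\ref{lem=2,p-group} quotes Lemma~\ref{L-1} to conclude $Q_8\times\Z_p\notin\BC$, so at $p=3$ the proof of Theorem~\ref{1} acquires a gap that must be repaired by exhibiting some other non-Cayley Haar graph of $Q_8\times\Z_3$ (or an independent argument that $Q_8\times\Z_3\notin\BC$), exactly as you suggest with your ``separate connection set.''
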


\begin{proof} Let $\G=\H(H,S)$ and let $A=\Aut(\G)$.
The lemma holds for $p=3$ and $5$ by {\sc Magma}~\cite{BCP}, and we assume that $p\geq 7$ in the rest of the proof.
Since $A$ is transitive or has the two orbits $H_0$ and $H_1$ as same as $R(H)$, we have $|A_{h_0}|=|A_{k_1}|$ for any $h,k\in H$.

For each $h\in H$, the neighborhood of the vertices $h_0$ and $h_1$ in $\G$ are $\{(sh)_1~|~s\in S\}$ and $\{(s^{-1}h)_0~|~s\in S\}$, respectively. From this, it is easy to list the vertices in $\G$ having distance at most
$2$ from $1_0$ or $c_1$ in Table~\ref{table=1}.

\begin{table}[ht]

\begin{center}
\begin{tabular}{|l|l|l|}

\hline
 $v$    & neighbors of $v$          & vertices having distance 2 from $v$   \\
\hline
\multirow{5}{*}{$1_0$}  & $1_1$ &$(a^{-1})_0$, $(c^{-1})_0$, {\bf $(a^{-1}bc)_0$}, $(b^{-1}c^{-1})_0$\\
\cline{2-3}
& $a_1$ & $a_0$, $(ac^{-1})_0$, $(b^{-1}c)_0$, $(abc^{-1})_0$ \\
\cline{2-3}
 &$c_1$ & $c_0$, $(a^{-1}c)_0$, $(a^{-1}bc^{2})_0$, $(b^{-1})_0$\\
 \cline{2-3}
 &$(abc^{-1})_1$ &$(abc^{-1})_0$, $(bc^{-1})_0$, $(abc^{-2})_0$, $(a^{-1}c^{-2})_0$\\
  \cline{2-3}
 &$(bc)_1$ & $(bc)_0$, $(a^{-1}bc)_0$, $b_0$,  $(ac^{2})_0$\\
\hline
\multirow{5}{*}{$c_1$}  & $c_0$ &$(ac)_1$, $(c^2)_1$, $(ab)_1$, $(bc^2)_1$\\
\cline{2-3}
& $(a^{-1}c)_0$ & $(a^{-1}c)_1$, $(a^{-1}c^2)_1$, $(b^{-1})_1$, $(abc^{2})_1$ \\
\cline{2-3}
 &$1_0$ & $1_1$, $a_1$, $(abc^{-1})_1$, $(bc)_1$\\
 \cline{2-3}
 &$(a^{-1}bc^2)_0$ &$(a^{-1}bc^2)_1$, $(bc^2)_1$, $(a^{-1}bc^3)_1$, $(a^{-1}c^{3})_1$\\
  \cline{2-3}
 &$(b^{-1})_0$ & $(b^{-1})_1$, $(a^{-1}b)_1$, $(b^{-1}c)_1$,  $(ac^{-1})_1$\\
\hline

\end{tabular}
\end{center}
\vskip -0.5cm
\caption{The vertices in $\G$ having distance at most $2$ from $1_0$ or $c_1$.}\label{table=1}
\end{table}

Furthermore, we have the following equations:
\begin{eqnarray}
\G((b^{-1})_1)=\{(b^{-1})_0,(ab)_0,(b^{-1}c^{-1})_0,(a^{-1}c)_0,(b^2c^{-1})_0\},\label{neighbours1}\\
\G((bc^2)_1)=\{(bc^2)_0,(a^{-1}bc^2)_0,(bc)_0,(ac^3)_0,c_0\}.\label{neighbours2}
\end{eqnarray}

By Table~\ref{table=1}, there are exactly two $4$-cycles $C_1$ and $C_2$ passing through $1_0$:
$$C_1=(1_0,1_1,(a^{-1}bc)_0,(bc)_1),\ \ \ \ C_2=(1_0,a_1,(abc^{-1})_0,(abc^{-1})_1),$$
and there are exactly two $4$-cycles $C_3$ and $C_4$ passing through $c_1$:
$$C_3=(c_1,c_0,(bc^2)_1,(a^{-1}bc^2)_0),\ \ \ \ C_4=(c_1,(b^{-1})_0,(b^{-1})_1,(a^{-1}c)_0).$$
We depicted, using Table~\ref{table=1}, Eqs.~(\ref{neighbours1}) and~\eqref{neighbours2},
an induced subgraph of $\G$ in Figure~\ref{Fig-2}.
\begin{figure}[h]
\begin{center}
\unitlength 1mm
\begin{picture}(100,50)

\put(50, 0) {\circle*{1}} \put(49,-5){$1_0$}
\put(10, 15){\circle*{1}} \put(8,10){$(bc)_1$}
\put(50,0){\line(-8, 3){40}}

\put(25, 15){\circle*{2}} \put(24,11){$1_1$}
\put(50,0){\line(-5, 3){25}}

\put(40, 15){\circle*{1}} \put(36,13){$a_1$}
\put(50,0){\line(-2, 3){10}}

\put(55, 15){\circle*{1}} \put(55,12){$(abc^{-1})_1$}
\put(50,0){\line(1, 3){5}}

\put(80, 15){\circle*{1}} \put(81,11){$c_1$}
\put(50,0){\line(2, 1){30}}

\put(5,30){\circle*{1}} \put(0,32){{\small$(bc)_0$}}
\put(10,15){\line(-1, 3){5}}
\put(15,30){\circle*{1}} \put(9,32){{\small$(a^{-1}bc)_0$}}
\put(10,15){\line(1, 3){5}} \put(25,15){\line(-2, 3){10}}
\put(35,30){\circle*{2}} \put(25,32){{\small$(b^{-1}c^{-1})_0$}}
\put(25,15){\line(2, 3){10}}
\put(50,30){\circle*{1}} \put(44,32){{\small $(abc^{-1})_0$}}
\put(40,15){\line(2, 3){10}} \put(55,15){\line(-1, 3){5}}

\put(60,30){\circle*{1}} \put(53,25){{\small$(b^{-1})_0$}}
\put(80,15){\line(-4, 3){20}}
\put(80,30){\circle*{1}} \put(75,32){{\small $(a^{-1}c)_0$}}
\put(80,15){\line(0, 1){15}}
\put(70,30){\circle*{1}} \put(67,32){{\small $c_0$}}
\put(80,15){\line(-2, 3){10}}
\put(90,30){\circle*{1}} \put(91,32){{\small $(a^{-1}bc^2)_0$}}
\put(80,15){\line(2, 3){10}}
\put(42,44){\circle*{1}} \put(40,47){{\small $(b^{-1}c^{-1})_1$}}
\put(35,30){\line(1,2){7}} \put(50,30){\line(-4,7){8}}

\put(70,44){\circle*{2}} \put(65,47){{\small $(b^{-1})_1$}}
\put(60,30){\line(5,7){10}} \put(80,30){\line(-5,7){10}}
\put(35,30){\line(5, 2){35}}

\put(80,44){\circle*{1}} \put(80,47){{\small $(bc^{2})_1$}}
\put(70,30){\line(5,7){10}} \put(90,30){\line(-5,7){10}}
\put(5,30){\line(75, 14){75}}

\end{picture}
\end{center}\vspace{-0cm}
\caption{An induced subgraph of $\G$.}\label{Fig-2}
\end{figure}
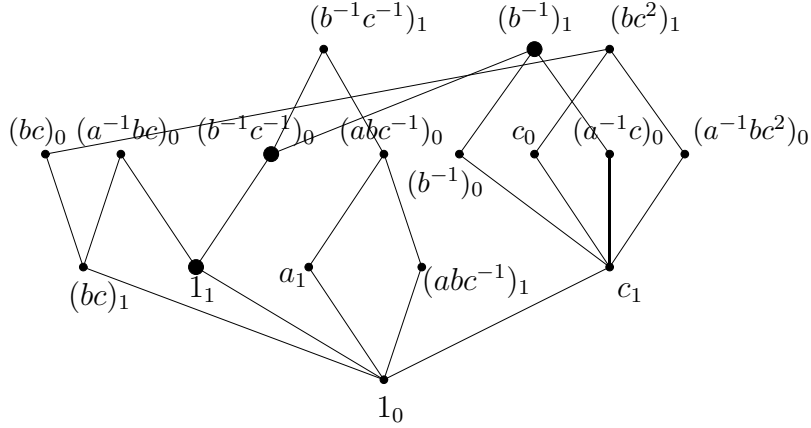

There exist $4$-cycles passing through $1_0$ and any given vertex in $\G(1_0)$ except $c_1$. Then $A_{1_0}\leq A_{c_1}$ and hence $A_{1_0}$ fixes $\{C_1,C_2\}$ and $\{C_3,C_4\}$ setwise. Furthermore, $A_{1_0}$ fixes $\{(b^{-1})_1,(bc^2)_1\}$ setwise because these two vertices are antipodal to $c_1$ in $C_3$ and $C_4$ respectively, and since $|A_{h_0}|=|A_{k_1}|$ for any $h,k\in H$, we have $A_{1_0}=A_{c_1}$.

We first prove that $A_{1_0}$ fixes the $4$-cycle $C_1$ setwise. Recall that $A_{1_0}$ fixes $\{C_1,C_2\}$ setwise. Suppose to the contrary that $\a\in A_{1_0}$ interchanges $C_1$ and $C_2$.  Then $\{1_1,(bc)_1\}^\a=\{a_1,(abc^{-1})_1\}$, and since $A_{1_0}$ fixes $\{(b^{-1})_1,(bc^2)_1\}$ setwise, we have $\{[\G(1_1)\cup \G((bc)_1)]\cap [\G((b^{-1})_1)\cup \G((bc^2)_1)]\}^\a=[\G(a_1)\cup \G((abc^{-1})_1)]\cap [\G((b^{-1})_1)\cup \G((bc^2)_1)]$, which is impossible because $[\G(a_1)\cup \G((abc^{-1})_1)]\cap [\G((b^{-1})_1)\cup \G((bc^2)_1)]=\emptyset$ and  $(bc)_0\in \G((bc)_1)\cap \G((bc^2)_1)$ by Table~\ref{table=1}, Eqs.~(\ref{neighbours1}) and~\eqref{neighbours2}. Thus, $A_{1_0}$ fixes $C_1$ setwise.

Now we prove that $A_{1_0}$ fixe $C_1$ pointwise. Since $A_{1_0}$ fixes $C_1$ setwise, it fixes $C_2$ setwise, implying $A_{1_0}$ fixes $(abc^{-1})_0$. Suppose to the contrary that $\b\in A_{1_0}$ interchanges $1_1$ and $(bc)_1$. By Table~\ref{table=1}, $\G((bc)_1)\cap [\G((b^{-1})_1)\cup \G((bc^2)_1)]=\{(bc)_0\}$ and  $\G(1_1)\cap [\G((b^{-1})_1)\cup \G((bc^2)_1)]=\{(b^{-1}c^{-1})_0\}$. Since $A_{1_0}$ fixes $\{(b^{-1})_1,(bc^2)_1\}$ setwise, $\b$ interchanges $(bc)_0$ and $(b^{-1}c^{-1})_0$, implying $\G((bc)_0)^\b=\G((b^{-1}c^{-1})_0)$. Since $A_{1_0}$ fixes $(abc^{-1})_0$, we have $[\G((abc^{-1})_0)\cap\G((bc)_0)]^\b=\G((abc^{-1})_0)\cap\G((b^{-1}c^{-1})_0)$. It is easy to see that $(b^{-1}c^{-1})_1\in \G((abc^{-1})_0)\cap\G((b^{-1}c^{-1})_0)$. Then $\G((abc^{-1})_0)\cap\G((bc)_0)\not=\emptyset$, and there is $s,t\in S$ such that $sabc^{-1}=tbc$, that is, $s^{-1}t=ac^{-2}\in S^{-1}S$, which is impossible because $S^{-1}S=\{1,a,c,abc^{-1},bc,a^{-1},a^{-1}c,bc^{-1},a^{-1}bc,c^{-1},ac^{-1},abc^{-2},b,b^{-1}c,a^{-1}bc^2$,
$ac^2$, $b^{-1}c^{-1}$, $b^{-1}$, $a^{-1}c^{-2}\}$. Thus, $A_{1_0}$ fixes $C_1$ pointwise, and hence $A_{1_0}=A_{1_1}=A_{(bc)_1}=A_{(a^{-1}bc)_0}$.

Since $A_{1_0}=A_{c_1}$, we have $A_{1_0}^{R(c^{-1})}=A_{c_1}^{R(c^{-1})}$, and so $A_{c^{-1}_0}=A_{1_1}=A_{1_0}$. Similarly, we have $A_{1_0}=A_{(b^{-1}c^{-1})_0}$ because $A_{1_0}=A_{(bc)_1}$. It follows that $A_{1_0}=A_{x_0}$ for any $x\in T:=\{c^{-1}, b^{-1}c^{-1},a^{-1}bc\}$, and an easy inductive argument implies that $A_{1_0}=A_{x_0}$ for any $x\in \sg{T}=H$. Thus, $A_{1_0}$ fixes $H_0$ pointwise.
Also, since $A_{1_0}=A_{1_1}$ implies $A_{h_0}=A_{h_1}$ for any $h\in H$, it follows that
$A_{1_0}$ fixes $H_1$ pointwise too.
Thus, $A_{1_0}=1$.

\medskip
To finish the proof, we are left with showing that $A$ is not vertex-transitive. Suppose to the contrary that $A$ is vertex-transitive. Since $A_{1_0}=1$, we have $|A|=|V(\G)|=2|R(H)|$ and hence
$R(H)\unlhd A$. By Proposition~\ref{ZF}, there exists $\d_{\b,x,y}\in A$
for some $\b\in \Aut(H)$ and $x,y\in H$ such that $S^{\b}=y^{-1}S^{-1}x$. By the transitivity of
$R(H)$ on $H_1$, we may assume $1_0^{\d_{\b,x,y}}=1_1$, anb by Eq.~\eqref{Eq-delta}, $1_0^{\d_{\b,x,y}}=(x1^{\b})_1=1_1$, forcing $x=1$. Recall that $S=\{1,a,c,abc^{-1},bc\}$, and $S^{\b}=y^{-1}S^{-1}$, that is,
\begin{equation}\label{Eq-p-gp-1}
S^\b=\{1^{\b},a^{\b},c^{\b},(abc^{-1})^{\b},(bc)^{\b}\}=
y^{-1}\{1,a^{-1},c^{-1},a^{-1}bc,b^{-1}c^{-1}\}.
\end{equation}
Since $1\in S$, we have $1\in S^\b$ and so $y^{-1}=1,a,c, abc^{-1}$ or $bc$.

Note that $H=Q_8\times\Z_p=\sg{a,b,c~|~a^4=b^4=c^p=[a,c]=[b,c]=1, a^2=b^2,a^b=a^{-1}}$. Then $Q_8$ and $\Z_p$ are characteristic in $H$, and hence $c^\b\in\sg{c}$ and $\sg{a,b}^\b=\sg{a,b}$.

Let $y^{-1}=a, abc^{-1}$ or $bc$. By Eq~(\ref{Eq-p-gp-1}), $S^\b=\{a,1,ac^{-1},bc,ab^{-1}c^{-1}\}$, $\{abc^{-1}$, $b^{-1}c^{-1}$, $abc^{-2}$, $1$, $ac^{-2}\}$ or $\{bc,ba^{-1}c,b,a^{-1}c^2,1\}$, which is impossible because $c^\b\in \sg{c}$.

Let $y=1$. Then $S^\b=\{1,a^{-1},c^{-1},a^{-1}bc,b^{-1}c^{-1}\}$ by Eq.~(\ref{Eq-p-gp-1}).
Since $a^{\b}\in\sg{a,b}$ and $c^{\b}\in \sg{c}$, we have $a^{\b}=a^{-1}$, $c^{\b}=c^{-1}$
and $\{(abc^{-1})^{\b},(bc)^{\b}\}=
\{a^{-1}bc,b^{-1}c^{-1}\}$, and since $(bc)^{\b}=b^{\b}c^{-1}\in \sg{a,b}c^{-1}$, we have $(bc)^{\b}=b^{-1}c^{-1}$. Hence $b^{\b}=b^{-1}$ and $(abc^{-1})^{\b}=a^{-1}bc$.
However, $a^{-1}bc=(abc^{-1})^{\b}=a^{\b} b^{\b} (c^{-1})^{\b}=a^{-1} b^{-1} c$, forcing $b^2=1$, a contradiction.

Let $y^{-1}=c$. Then $S^\b=\{c,a^{-1}c,1,a^{-1}bc^2,b^{-1}\}$, yielding that $a^{\b}=b^{-1}$ and $c^{\b}=c$.
Since $(abc^{-1})^{\b}=(ab)^\b c^{-1}\in S^\b$, we have $(ab)^\b c^{-1}=a^{-1}c$ or $a^{-1}bc^2$, forcing $c^2=1$ or $c^3=1$, contradicting $p\geq 7$. This completes the proof.
\end{proof}

To end this section, we describe some Haar graphs of small orders that are not vertex-transitive, and this can be checked easily by the computer software {\sc Magma}~\cite{BCP}.

\begin{lem}\label{smallorders} Let $G=H_i$ and $S$ be given in the following table for each $1\leq i\leq 9$:
\begin{table}[h]
\begin{center}
\begin{tabular}{|l|l|l|}

\hline
$i$ &$H_i$          & $S$   \\
\hline
1&$\sg{a,b,c~|~a^8,b^2,c^2,[a,c],[b,c],a^b=a^{-1}}\cong D_8\times\Z_2$ & $\{1,a,b,c,ab,abc\}$\\
\hline
2& $\sg{a,b,c~|~a^4,b^2,c^2,[a,b],[a,c],[b,c]=a^2}$ & $\{1,a,b,ab,ac,abc\}$\\
\hline
3&$\sg{a,b \mid a^8,b^2=a^4, a^b=a^{-1}}$ & $\{1,a,b,a^5,ab,a^5b\}$\\
\hline
4&$\sg{a,b \mid a^8,b^2,a^b=a^3}$ & $\{1,a,b,ab\}$\\
\hline
5&$\langle a,b,c,d\mid a^4,b^4,c^2,d^2,a^2=b^2,a^b=a^{-1},[a,c],[a,d],$
&$\{1,a,b,b^{-1},ab,ac,$\\
& ~~~~~~~~~~~~~$[b,c],[b,d],[c,d]\rangle\cong Q_8\times\Z_2\times\Z_2$& $bd,abd\}$\\
\hline
6&$\sg{a,b,c \mid a^4,b^4,c^3, a^2=b^2,a^b=a^{-1},a^c=b^{\pm1},b^c=a^{\pm1}b}$ &
$\{1,a,bc,abc\}$\\
\hline
7&$\sg{a,b,c\mid a^2,b^2,c^3,[a,b],a^c=b,b^c=ab}\cong A_4$ &$\{1,a,c,abc\}$\\
\hline
8&$\sg{a,g\mid a^5,g^4,a^g=a^{2}}\cong F_{20}$ &$\{1,a,g\}$\\
\hline
9&$\sg{a,c,b\mid a^p,c^p,b^2,[a,c],a^b=a^{-1},c^b=c^{-1}}\cong \Z_p^2\rtimes\Z_2$, $p=3,5$ &$\{1,a,c,b,ab,cb\}$\\
\hline

\end{tabular}
\end{center}
\vskip -0.5cm
%\caption{Some Haar graphs $\H(H_i,S)$ that are not vertex-transitive.}\label{Table2}
\end{table}

\noindent Then $\Aut(\H(G,S))$ is not vertex-transitive and $G\not\in\BC$.
\end{lem}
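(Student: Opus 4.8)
The statement bundles two assertions for each of the nine pairs $(H_i,S)$: that $\H(H_i,S)$ is not vertex-transitive, and that $H_i\notin\BC$. The second is immediate from the first, since every Cayley graph is vertex-transitive (its group acts regularly), so a Haar graph that fails to be vertex-transitive cannot be a Cayley graph, and exhibiting one such $S$ witnesses $H_i\notin\BC$ by the definition of $\BC$. Thus the entire content is to show, for each $i$, that $\H(H_i,S)$ is not vertex-transitive. As the orders $|H_i|$ are small (between $12$ and $50$), the practical route is the one the authors indicate: compute $\Aut(\H(H_i,S))$ directly in {\sc Magma} and observe that it fixes the bipartition $\{(H_i)_0,(H_i)_1\}$ setwise, so that the two parts are its orbits.

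For a verification by hand I would use a local-invariant argument, exploiting that $R(H_i)$ is already transitive on each of $(H_i)_0$ and $(H_i)_1$. Since $\G=\H(H_i,S)$ is connected (one checks $\sg{S}=H_i$) and bipartite with the unique bipartition $(H_i)_0\cup(H_i)_1$, the graph is vertex-transitive if and only if some automorphism interchanges the two parts; otherwise $\Aut(\G)$ preserves the bipartition, and the parts, being $R(H_i)$-orbits, are its orbits. So it suffices to produce one automorphism-invariant of vertices that is constant on $(H_i)_0$, constant on $(H_i)_1$, and takes distinct values on the two parts. The most economical such invariant is the number of $4$-cycles through a vertex. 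A short count using $\G(h_0)=(Sh)_1$ and $\G(h_1)=(S^{-1}h)_0$ gives that the number of $4$-cycles through $1_0$ is $N_0=\sum_{w\neq1}\binom{|S\cap Sw|}{2}$, while the number through $1_1$ is $N_1=\sum_{w\neq1}\binom{|S\cap wS|}{2}$, the two sums involving right- and left-translates of $S$ respectively; by the transitivity of $R(H_i)$ these values are the same for every vertex of the respective part. Whenever $N_0\neq N_1$ no automorphism can swap the parts, and $\G$ is not vertex-transitive.

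If for some $H_i$ one has $N_0=N_1$, the $4$-cycle count alone does not separate the parts, and I would escalate exactly as in the proofs of Lemmas~\ref{L-2} and~\ref{L-1}: track not merely how many $4$-cycles pass through each vertex but how they are distributed among the incident edges (the numbers $1,3,4$ appearing in the analysis of Figure~\ref{Fig-1}), or pass to counts of $6$-cycles. Such a refined local profile pins down the vertex-stabiliser $\Aut(\G)_{1_0}$; if it turns out to be trivial, then a part-swapping automorphism would force $R(H_i)\unlhd\Aut(\G)$ (index two), and Proposition~\ref{ZF} would supply an element $\d_{\b,x,y}$ with $S^\b=y^{-1}S^{-1}x$, whose non-existence one rules out by the characteristic-subgroup bookkeeping used there (the relevant $\sg{c}\cong\Z_p$, the $Q_8$, or the dihedral section of each $H_i$ being characteristic, which constrains $S^\b$ and produces a contradiction as in Eqs.~\eqref{Eq-11} and~\eqref{Eq-p-gp-1}).

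The main obstacle is uniformity: there is no reason the single invariant $N_0$ versus $N_1$ should separate the parts for all nine groups simultaneously, since the authors may well have chosen each $S$ so that the two parts share the coarse local statistics, just as the carefully tuned sets in Lemmas~\ref{L-2} and~\ref{L-1} required tracking individual $4$-cycles rather than mere counts. Finding, group by group, an invariant fine enough to certify intransitivity while avoiding a full computation of the stabiliser is the delicate part, and it is precisely this case-by-case fragility that makes the direct {\sc Magma} verification the natural and reliable choice here.
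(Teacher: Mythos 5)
Your proposal is correct and matches the paper's approach: the paper proves this lemma solely by the stated {\sc Magma} computation, and your reduction (non-vertex-transitive $\Rightarrow$ not Cayley $\Rightarrow$ $H_i\notin\BC$) is exactly the implicit logic connecting the two claims. The hand-verification sketch via $4$-cycle invariants is a reasonable supplement mirroring Lemmas~\ref{L-2} and~\ref{L-1}, but the paper itself offers nothing beyond the computer check.
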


\section{Proof of Theorem~\ref{1}}

In this section, we aim to prove Theorem~\ref{1}. First, we need two lemmas.

\begin{lem}\label{lem=2-group}
A non-abelian $2$-group belongs to the class $\BC$ if and only if it is isomorphic to $D_8$, $Q_8$ or $Q_8\times\Z_2$.
\end{lem}

\begin{proof} By Proposition~\ref{Prop=Inner abelian group}, $D_8\in \BC$ and
$Q_8\in\BC$. For $Q_8\times\Z_2$, let $\G=\H(Q_8\times\Z_2,S)$
be a Haar graph with $1\in S$. If
$\G$ is not connected, then $\sg{S}<Q_8\times\Z_2$
(see~\cite[Lemma~1~(i)]{EP}), and either
$\sg{S}$ is abelian or $\sg{S}\cong Q_8$.
This implies that the Haar graph $\H(\sg{S},S)$ is a Cayley graph, and since $\G$ is a union of components with each isomorphic to $\H(\sg{S},S)$, $\G$ is a Cayley graph. If $\G$ is connected, a computation by {\sc Magma}~\cite{BCP} shows that all connected Haar graphs of $Q_8\times\Z_2$
are Cayley graphs. Thus, $Q_8\times\Z_2\in\BC$.

Let $H$ be a non-abelian $2$-group and $H\in \BC$. To prove the necessity, it suffices to show that $H\cong D_8, Q_8$ or $Q_8\times\Z_2$.

\medskip
\noindent {\bf Case 1:} $|H|\leq 8$.

Since $H$ is a non-abelian $2$-group, we have $H\cong D_8$ or $Q_8$.

\medskip
\noindent {\bf Case 2:} $|H|=16$.

Note that all non-abelian groups of order $16$ can be found in~\cite{HS} (this can also be obtained by the computer software {\sc Magma}~\cite{BCP}). By Proposition~\ref{Prop=2}~(iii), $H$ has a subgroup isomorphic to $D_8$ or $Q_8$, and hence $H\cong H_i$ for some $1\leq i\leq 6$:
$$
\begin{array}{ll}
&H_1=\sg{a,b,c~|~a^8=b^2=c^2=[a,c]=[b,c]=1,a^b=a^{-1}}~(\cong D_8\times\Z_2);\\
&H_2=\sg{a,b,c~|~a^4=b^2=c^2=[a,b]=[a,c]=1,[b,c]=a^2};\\
&H_3=\sg{a,b \mid a^8=1,b^2=a^4, a^b=a^{-1}};\\
&H_4=\sg{a,b \mid a^8=b^2=1,a^b=a^3}; \ \ \ H_5=D_{16}; \ \ \ H_6=Q_8\times \Z_2.\\

\end{array}$$
By Proposition~\ref{Prop=1}, $H_5\not\in \BC$, and by Lemma~\ref{smallorders}, $H_i\not\in\BC$ for each $1\leq i\leq 4$. It follows that $H\cong H_6=Q_8\times\Z_2$.

\medskip
\noindent{\bf Case 3:} $|H|\geq 32$.

Since $H\in \BC$, Proposition~\ref{L1} implies that each subgroup of $H$ belongs to $\BC$. If each subgroup of $H$ of order $32$ is abelian, then $H$ has an inner abelian subgroup of order at least $64$, which is impossible by Proposition~\ref{Prop=Inner abelian group}. Thus, $H$ has a non-abelian subgroup of order $32$, say $L$.
Then $L\in \BC$. Similarly, $L$ has a non-abelian subgroup of order $16$, and by the proof of Case~2, each non-abelian subgroup of $L$ of order $16$ is isomorphic to $Q_8\times\Z_2$. By checking the non-abelian groups of order $32$ listed in~\cite{HS}, we have $L\cong Q_8\times\Z_2\times\Z_2$, and by Lemma~\ref{smallorders}, $L\notin\BC$, a contradiction.
\end{proof}

\begin{lem}\label{lem=2,p-group}
Let $p$ be  an odd prime, and let $H$ be a non-abelian $\{2,p\}$-group with $p\mid |H|$.
Then $H\in\mathcal{BC}$ if and only if $H\cong D_6$ or $D_{10}$.
\end{lem}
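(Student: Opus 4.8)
We first dispose of the easy direction. The groups $D_6=D_{2\cdot 3}$ and $D_{10}=D_{2\cdot 5}$ are non-abelian $\{2,p\}$-groups (with $p=3$ and $p=5$, respectively), and both lie in $\BC$ by Proposition~\ref{Prop=1} (the cases $n=3,5$). For the ``only if'' part, let $H$ be a non-abelian $\{2,p\}$-group in $\BC$ with $p\mid|H|$, let $P$ be a Sylow $p$-subgroup and $T$ a Sylow $2$-subgroup. By Proposition~\ref{Prop=2}(ii), $P$ is abelian. Since $\BC$ is closed under subgroups (Proposition~\ref{L1}), every subgroup of $H$ lies in $\BC$, so $H$ has no subgroup isomorphic to $D_{2n}\times\Z_p$ with $n\geq 3$ (Lemma~\ref{L-2}) or to $Q_8\times\Z_p$ (Lemma~\ref{L-1}), every non-abelian $2$-subgroup is isomorphic to $D_8$, $Q_8$ or $Q_8\times\Z_2$ (Lemma~\ref{lem=2-group}), and, once $p\in\{3,5\}$ is known, no subgroup is isomorphic to $A_4$, $F_{20}$, $Q_8\rtimes\Z_3$ or $\Z_p^2\rtimes\Z_2$ (Lemma~\ref{smallorders}).

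The plan is to show that $H\cong D_{2p}$ and then invoke Proposition~\ref{Prop=1} to conclude $p\in\{3,5\}$. First I would show $P$ is not central: if $P\leq Z(H)$ then $P$ is a central Sylow subgroup, so by Burnside's normal $p$-complement theorem $H=P\times T$; as $H$ is non-abelian, $T$ is non-abelian, whence $T\in\{D_8,Q_8,Q_8\times\Z_2\}$ and $H$ contains $\Z_p\times D_8\cong D_8\times\Z_p$ or $\Z_p\times Q_8\cong Q_8\times\Z_p$, contradicting Lemma~\ref{L-2} or Lemma~\ref{L-1}. Hence $H$ acts non-trivially on $P$ by conjugation. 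Next I would locate an involution inverting an element of order $p$: assuming for now that $P\trianglelefteq H$, the quotient $H/C_H(P)$ embeds in $\Aut(P)$ and, because $P$ is the normal Sylow $p$-subgroup of $H$ (so all $p$-elements lie in $C_H(P)$), it is a non-trivial $2$-group. Since $p$ is odd, the coprime decomposition $P=C_P(t)\times[P,t]$ for a suitable $2$-element $t$ shows $t$ inverts every element of $[P,t]\neq 1$; ruling out that $t$ must act with order $>2$ (automatic for $p=3$, where $\Aut(\Z_p)=\Z_2$, while an order-$4$ action for $p=5$ would give $\Z_5\rtimes\Z_4\cong F_{20}\leq H$) yields an involution $t$ and an $x$ of order $p$ with $x^t=x^{-1}$, so $\sg{x,t}\cong D_{2p}\leq H$ and $p\in\{3,5\}$ by Proposition~\ref{Prop=1}.

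With $p\in\{3,5\}$ fixed I would bound both Sylow subgroups. If $|P|\geq p^2$, then $[P,t]$ contains a copy of $\Z_p\times\Z_p$ inverted by $t$, giving $\Z_p^2\rtimes\Z_2\leq H$ and contradicting Lemma~\ref{smallorders}; hence $P\cong\Z_p$. For the $2$-part, write $C_H(P)=P\times M$ with $M$ a $2$-group (since $P$ is central in $C_H(P)$). If $M$ is non-abelian we recover $D_8\times\Z_p$ or $Q_8\times\Z_p$, again forbidden; if $1\neq M$ is abelian, an involution of $M$ together with the inverting involution $t$ produces either $D_{2p}\times\Z_2\cong D_{4p}=D_{2\cdot 2p}$ (forbidden by Proposition~\ref{Prop=1} as $2p\geq 6$) or a subgroup $\cong D_{2n}\times\Z_p$, both impossible. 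Thus $M=1$, so $C_H(P)=P$ and $T\cong H/P$ embeds in the cyclic group $\Aut(\Z_p)$; since it is non-trivial and an order-$4$ image for $p=5$ would again give $F_{20}$, we get $T\cong\Z_2$, $|H|=2p$, and $H\cong D_{2p}$ with $p\in\{3,5\}$.

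The main obstacle is the bookkeeping of the conjugation action of the $2$-elements on the odd part, and in particular the two places where I assumed more than is free. First, a $2$-element may act on $P$ with order larger than $2$, so one cannot simply read off an \emph{involution} inverting a $\Z_p$; this is exactly what the exclusion of $F_{20}$ (and, when $P$ has rank $\geq 2$, control of the larger $2$-automorphisms) is needed for. Second, and more seriously, the reduction to $P\trianglelefteq H$ is not automatic: if $O_p(H)=1$ then the Fitting subgroup is the self-centralising $2$-group $O_2(H)$, on which $P$ acts faithfully, and one must extract a forbidden configuration from a genuinely non-abelian Sylow $2$-subgroup. Here the unique involution of $Q_8$ may centralise the entire $p$-part, forcing the inverting involution to be found elsewhere; resolving this is precisely where the exclusions of $A_4$ (from the natural action of $\Z_3$ on $\Z_2^2$) and of $Q_8\rtimes\Z_3$ in Lemma~\ref{smallorders} do the decisive work, pinning $p=3$ and then collapsing the structure back to $D_6$.
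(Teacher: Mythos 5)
Your outline is fine in its first half (the reduction to $P$ non-central, and the endgame once $P\trianglelefteq H$, $P\cong\Z_p$ and $C_H(P)=P$ are known), but the case $P\ntrianglelefteq H$ is a genuine gap, not a deferred detail, and you concede as much yourself: you prove the key steps only ``assuming for now that $P\trianglelefteq H$'' and the closing paragraph merely gestures at the exclusions of $A_4$ and $Q_8\rtimes\Z_3$ as what ``would'' resolve it. That gesture is insufficient. If $O_p(H)=1$, then $P$ acts faithfully on $O_2(H)$, and $O_2(H)$ can a priori be a \emph{large abelian} $2$-group: Lemma~\ref{lem=2-group} constrains only non-abelian $2$-subgroups, while abelian $2$-subgroups of members of $\BC$ are unbounded (every abelian group lies in $\BC$). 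So your toolkit must also kill, for instance, $\Z_2^3\rtimes\Z_7$ and $\Z_2^4\rtimes\Z_5$ with faithful irreducible actions, where $p\neq 3$, no copy of $A_4$, $F_{20}$ or $Q_8\rtimes\Z_3$ occurs, and none of Lemmas~\ref{L-2}, \ref{L-1}, \ref{smallorders} applies; these groups are inner abelian, so one needs Proposition~\ref{Prop=Inner abelian group} (or Proposition~\ref{Prop=2}(iii)) --- a tool your proposal never invokes. More fundamentally, your argument is non-inductive, so you have no handle on $|H|$ in this case. The paper manufactures exactly this leverage by taking a \emph{minimal counterexample} and splitting on normality of the Sylow $2$-subgroup $P_2$ rather than of $P$: when $P_2\trianglelefteq H$, minimality forces $|P|=p$ and pins $H$ down to $Q_8\rtimes\Z_3$, $(Q_8\times\Z_2)\rtimes\Z_3$ or a direct product, all excluded; when $P_2\ntrianglelefteq H$, Burnside's normal $p$-complement theorem produces a $2$-element $g\in N_H(P)\setminus C_H(P)$, and minimality applied to $P\rtimes\sg{g}$ drives the two remaining subcases (including the proof that $P\trianglelefteq H$ after all, which is the paper's Claim~2). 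Your proposal has no substitute for this mechanism.

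There is also a secondary flaw in the normal case, though it is repairable. From a $2$-element $t$ inducing an automorphism of order $2$ on $P$ you cannot conclude that $H$ contains an \emph{involution} inverting an element of order $p$: all you know is $t^2\in C_H(P)$, and then for $x\in[P,t]$ of order $p$ the subgroup $\sg{x,t}$ is $\Z_p\rtimes\Z_{2^k}$ with the action of order $2$ (a dicyclic-type group such as $\Z_3\rtimes\Z_4$), not $D_{2p}$. The fix is available but again runs through the proposition you omit: for $k\geq 2$ the group $\Z_p\rtimes\Z_{2^k}$ with order-$2$ action is inner abelian and not isomorphic to $D_6$, $D_8$, $D_{10}$ or $Q_8$, hence excluded by Proposition~\ref{Prop=Inner abelian group} together with Proposition~\ref{L1}, forcing $k=1$ and recovering your $D_{2p}$. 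As written, however, the step ``yields an involution $t$'' is false, and both this and the main gap show that the inner abelian classification is an indispensable ingredient that your route leaves out.
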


\begin{proof} By~Proposition~\ref{Prop=1}, $D_{6}\in \BC$
and $D_{10}\in \BC$. Let $H$ be a non-abelian $\{2,p\}$-group with $p\mid |H|$. To prove the necessity, suppose to the contrary that $H$ is a minimal counterexample, that is, $H\in\BC$ has the smallest order such that $H\not\cong D_6$ or $D_{10}$.

Denote by $P$ and $P_2$ a Sylow $p$-subgroup and a Sylow $2$-subgroup
of $H$, respectively. Then $H=PP_2$, and by Proposition~\ref{L1}, $P\in \BC$ and $P_2\in \BC$.
By Proposition~\ref{Prop=2}~(ii),
$P$ is abelian, and by Lemma~\ref{lem=2-group}, either $P_2$ is abelian or $P_2\cong D_8$, $Q_8$, or $Q_8\times\Z_2$.
Now we consider the two cases depending whether $P_2$ is normal in $H$.
\medskip

\noindent {\bf Case~1:} $P_2\unlhd H$.

Suppose that $P_2$ is abelian. It follows from Proposition~\ref{Prop=2}~(iii)
that $H$ has a subgroup $D_{2p}$ with $p=3$ or $5$.
Since $P_2$ is the unique Sylow $2$-subgroup of $H$, all
involutions of $D_{2p}$ are contained in $P_2$,
and since $D_{2p}$ can be generated by its two involutions,
we have $D_{2p}\leq P_2$, which is impossible.
Hence $P_2$ is non-abelain. By Proposition~\ref{L1}, $P_2\in\BC$, and by Lemma~\ref{lem=2-group},
$P_2\cong D_8$, $Q_8$ or $Q_8\times\Z_2$.
It follows $H=P_2\rtimes P$, and by the minimality of $H$, $|P|=p$. Thus, $H=P_2\rtimes P\cong D_8\rtimes\Z_p$,
$Q_8\rtimes\Z_p$ or $(Q_8\times\Z_2)\rtimes\Z_p$.

Consider the centralizer $C_H(P_2)$ of $P_2$ in $H$.
If $P\leq C_H(P_2)$, then $H=P_2\times P\cong D_8\times\Z_p$,
$Q_8\times \Z_p$, or $Q_8\times\Z_2\times\Z_p$,
and by Lemmas~\ref{L-2} and \ref{L-1}, $D_8\times\Z_p\not\in\BC$ and  $Q_8\times\Z_p\not\in\BC$. Also we have
 $Q_8\times\Z_2\times\Z_p\not\in\BC$ because otherwise $Q_8\times\Z_2\times\Z_p\in\BC$ implies  $Q_8\times\Z_p\in\BC$ by Proposition~\ref{L1}. Thus, $H\not\in \BC$, a contradiction. Hence $P\nleq C_H(P_2)$,
and since $|P|=p$, we have $P\cap C_H(P_2)=1$ and so $C_H(P_2)\leq P_2$.
Note that $\Aut(D_8)\cong D_8$, $\Aut(Q_8)\cong S_4$, and $\Aut(Q_8\times\Z_2)\cong\Z_2^3\rtimes S_4$. Since $N_H(P_2)/C_H(P_2)=H/C_H(P_2)\lesssim \Aut(P_2)$, we have $P_2\cong Q_8$ or $Q_8\times\Z_2$, and $P\cong \Z_3$, which implies that a generator of $P$ induces (by conjugacy) an automorphism of $P_2$ of order $3$.
For $P_2\cong Q_8$, let $P_2=\sg{a,b \mid a^4=b^4=1, a^2=b^2, a^b=a^{-1}}$, and let $\a$ be the automorphism of $P_2$ of order $3$ induced by the map $a\mapsto b$ and $b\mapsto ab$. Since all the automorphisms of $P_2$ of order $3$ are conjugate in $\Aut(P_2)$, we have $H\cong P_2\rtimes\sg{\a}=\sg{a,b,\a \mid a^4=b^4=\a^3=1, a^2=b^2,a^b=a^{-1},a^{\a}=b,b^{\a}=ab}$.
By Lemma~\ref{smallorders}, $H\cong H_6$ and $H\notin\BC$,
a contradiction. Similarly, for $P_2\cong Q_8\times\Z_2$, let $P_2=\sg{a,b \mid a^4=b^4=1, a^2=b^2, a^b=a^{-1}} \times\sg{c}$ with $\sg{c}\cong\Z_2$, and let $\a$ be the automorphism of $P_2$ of order $3$ induced by the map $a\mapsto b$, $b\mapsto ab$ and $c\mapsto c$.
 It follows that $H\cong (\sg{a,b}\times \sg{c})\rtimes\sg{\a}\cong H_6\times\Z_2$ and hence $H\notin \BC$ as $H_6\not\in\BC$, a contradiction.
\medskip

\noindent {\bf Case~2:} $P_2\ntrianglelefteq H$.

Since $P$ is abelian, we have $P\leq C_H(P)\leq N_H(P)$. If $C_H(P)=N_H(P)$,
then the Burnside's $p$-nilpotency criterion implies that
$P$ has a normal complement $N$, that is,
$N$ is a normal Sylow $2$-subgroup of $H$, which is impossible by Case~1.
Thus, we may assume that $P\leq C_H(P)<N_H(P)$, and hence there exists a $2$-element $g$ such that $g\in N_H(P)$ and $g\not\in C_H(P)$. In particular, $P\rtimes\sg{g}$ is a non-abelian subgroup of $H$ and $p \ |\ |P\rtimes\sg{g}|$. By the minimality of $H$,
either $H=P\rtimes\sg{g}$, or $H>P\rtimes\sg{g}\cong D_6$ or $D_{10}$.

\medskip
\noindent {\bf Subcase 2.1:} $H=P\rtimes\sg{g}$.

In this case, $P_2=\langle g\rangle$ and $P\unlhd H$. By Proposition~\ref{Prop=Inner abelian group}, $H$ contains a proper subgroup $K$ such that $K\cong D_6$ or $D_{10}$. Then $p=3$ or $5$. Let $K=\langle a,b\ |\ a^p=b^2=1,a^b=a^{-1}\rangle$. We may assume $b\in P_2$, and since $P_2=\langle g\rangle$, $b$ is the unique involution in $P_2$. Furthermore, $PK\leq H$ is non-abelian. Let $|P|=p^s$ for some $s\geq 1$. Then $|PK|=2p^s$.

If $s=1$ then $P\cong \Z_p$ and hence $P\leq K$. Since $K\cong D_6$ or $D_{10}$, we have $K<H$, and since $K/P<H/P\cong P_2$, $H$ contains a subgroup of order $4p$ with $K$ as a subgroup of index $2$. By  the minimality of $H$, we have $P_2=\langle g\rangle\cong \Z_4$. It follows $b=g^2$, and since $a^b=a^{-1}$, we have $p=5$ and $a^g=a^2$ or $a^3$. This implies that $H\cong F_{20}$ and by Lemma~\ref{smallorders}, $H\not\in\BC$, a contradiction. Thus, $s\geq 2$.

Suppose $s\geq 3$. Note that $P\unlhd H$ and $K=\langle a,b\rangle\cong D_{2p}<H$ with $o(a)=p$. By the minimality of $H$, we have $H=PK$. Then $|H|=2p^s$ and $P_2=\langle g\rangle=\sg{b}\cong\Z_2$. If $P$ is cyclic, it is easy to see that $H$ is dihedral, which is impossible by Proposition~\ref{Prop=1}. Thus, $P$ is not cyclic,
and since $P$ is abelian,
Proposition~\ref{prop=abelian p-group} implies that
$P$ has an element $c$ of order $p$ with
$\langle c\rangle\cap \langle a\rangle=1$.
If $c^b\not\in \langle c\rangle$ then $\langle c,c^b,b\rangle$ is a non-abelian subgroup of order $2p^2$, and by the minimality of $H$, we have $H=\langle c,c^b,b\rangle$, which is impossible because $|H|=2p^s$ with $s\geq 3$. Similarly, if  $c^b\in \langle c\rangle$ then $\langle a,c,b\rangle$ is a non-abelian subgroup of order $2p^2$, which is also impossible.

Thus, $s=2$ and $|H|=2p^2$. From the elementary group theory we know that up to isomorphism there are three non-abelian groups of order $2p^2$ defined as:
$$
\begin{array}{lll}
H_{1}(p)&=&\langle a,b\mid a^{p^2}=b^{2}=1, b^{-1}ab=a^{-1}\rangle;\\
H_{2}(p)&=&\langle a,b,c\mid a^{p}=b^{p}=c^2=[a,b]=1, c^{-1}ac=a^{-1}, c^{-1}bc=
b^{-1}\rangle;\\
H_{3}(p)&=&\langle a,b,c \mid a^{p}=b^{p}=c^2=1,[a,b]=[a,c]=1,c^{-1}bc=
b^{-1}\rangle.
\end{array}
$$

Thus, $H\cong H_1(p), H_2(p)$ or $H_3(p)$. Note that $H_{1}(p)\cong D_{2p^2}$ and $H_{3}(p)\cong D_{2p}\times \Z_p$. By Proposition~\ref{Prop=1} and Lemma~\ref{L-2}, $H_{1}(p)\not\in\BC$ and $H_{3}(p)\not\in\BC$. Recall that $p=3$ or $5$. By Lemma~\ref{smallorders},  $H_2(p)\not\in \BC$. It follows that $H\not\in \BC$, a contradiction.

\medskip
\noindent {\bf Subcase 2.2:} $H>P\rtimes\sg{g}\cong D_6$ or $D_{10}$.

Clearly, $\sg{g}\cong\Z_2$, $p=3$ or $5$, and $P\cong \Z_3$ or $\Z_5$. Let $P=\langle a\rangle\cong\Z_p$. Without any loss of generality, we may assume that $g\in P_2$. First we prove two claims.

\medskip
\noindent {\bf Claim 1:} $P_2\cong D_8$, $Q_8$, or $Q_8\times\Z_2$.

Recall that either $P_2$ is abelian, or $P_2\cong D_8, Q_8$ or $Q_8\times\Z_2$.

Suppose that $P_2$ is abelian. Then $P_2\leq C_H(P_2)\leq N_H(P_2)$. Since$|H:P_2|=p$ and $P_2\ntrianglelefteq H$, we have $P_2=C_H(P_2)=N_H(P_2)$. By the Burnside's $p$-nilpotency criterion, $P$ is the normal complement of $P_2$ in $H$, that is,
$P\unlhd H$ and $H=P\rtimes P_2$. Note that $\Z_2\cong\sg{g}\cong (P\rtimes\sg{g})/P<H/P\cong P_2$. Then $H/P$ contains a subgroup of order $4$, and hence $H$ has a non-abelian subgroup $L$ with $P\rtimes\sg{g}$ as a subgroup of index $2$. By the minimality of $H$, $H=L$ and so $|P_2|=4$. In particular, $P_2\cong\Z_4$ or $\Z_2^2$.

Let $P_2=\langle b\rangle\cong\Z_4$. Then $H=P\rtimes P_2=\sg{a,b,g\mid a^p=b^4=1,g=b^2, a^b=a^i,a^g=a^{-1}}$
with $i\in\Z_p^*$. Since $a^{-1}=a^g=a^{b^2}=a^{i^2}$,
we have $i^2=-1$ in $\Z_p$, and since $p=3$ or $5$, we have
$i=\pm2$ and $p=5$. Hence $H$ is isomorphic to the Frobenius group $F_{20}$ of order 20 and so $H\notin\BC$ by Lemma~\ref{smallorders}, a contradiction.

Let $P_2=\langle b,g\rangle\cong\Z_2^2$. Considering the action of $P_2$ on $P$, we have
$H=P\rtimes P_2\cong\sg{a,b,g\mid a^p=b^2=g^2=[b,g]=1, a^b=a,a^g=a^{-1}}\cong D_{4p}$, and by Proposition~\ref{Prop=1}, $H\notin\BC$, a contradiction.

It follows that $P_2\cong D_8$, $Q_8$, or $Q_8\times\Z_2$, as claimed.

\medskip
\noindent {\bf Claim 2:} $P\unlhd H$.

Let $N$ be a minimal normal subgroup of $H$. Since $P\cong\Z_p$, we have $N=P$ or $N=\Z_2^\ell$ for some $\ell\geq 1$. If $N=P$ then $P\unlhd H$, as claimed. If $N=\Z_2^\ell$, then $(P\rtimes\sg{g})N>P\rtimes\sg{g}$ because $P\rtimes\sg{g}$ ($\cong D_6$ or $D_{10}$) has non-normal Sylow $2$-subgroups. By the minimality of $H$, we have $H=N(P\rtimes\sg{g})$. Since $P_2\ntrianglelefteq H$, we have $N<P_2$ and hence $NP<H$. Clearly, $NP\not\cong D_6$ or $D_{10}$, and the minimality of $H$ implies that $NP$ is abelian. It follows $NP=N\times P$ and $H=(N\times P)\rtimes\sg{g}$. Thus, $P\unlhd H$, as claimed.

\medskip
By Claim~2, $PK\leq H$ for any subgroup $K\leq H$, and by Claim~1, $P_2\cong D_8$, $Q_8$, or $Q_8\times\Z_2$. If $P_2\cong Q_8\times\Z_2$ then $H$ has a proper subgroup isomorphic to $P\rtimes Q_8$, and the minimality of $H$ implies that either $P\rtimes Q_8$ is abelian, or $P\rtimes Q_8\cong D_6$ or $D_{10}$, both of which are impossible. If $P_2\cong Q_8$, then $P\rtimes\langle a\rangle$ is a proper subgroup of $H$ for any element $a$ of order $4$ in $P_2$, and by the minimality of $H$,  $P\rtimes\langle a\rangle$ is abelian because $P\rtimes\langle a\rangle\ncong D_6$ or $D_{10}$. This implies that $[P,P_2]=1$ as $P_2\cong Q_8$, and hence $H=P\times P_2$, which is impossible because $P\rtimes\langle g\rangle\cong D_6$ or $D_{10}$. If $P_2\cong D_8$, let $P_2=\sg{a,g\mid a^4=g^2=1,a^g=a^{-1}}$ and $P=\sg{b}$, and a similar argument
as above implies $[P,\sg{a}]=1$. Since $P\rtimes\sg{g}\cong D_6$ or $D_{10}$, we have $b^g=b^{-1}$. It follows that
$$H\cong\Z_p\rtimes D_8=\sg{a,b,g\mid b^p=a^4=g^2=1,a^g=a^{-1},b^a=b,b^g=b^{-1}}\cong D_{8p}.$$
By Proposition~\ref{Prop=1}, $H\notin\BC$,  a contradiction.

\end{proof}

Now we are ready to prove Theorem~\ref{1}.
\medskip

\begin{proof}[\bf Proof of Theorem~\ref{1}:] By Lemmas~\ref{lem=2-group} and \ref{lem=2,p-group}, $D_8$, $Q_8$, $Q_8\times\Z_2$, $D_6$ and $D_{10}$ belong to $\BC$. To prove the necessity, let $H$ be a non-abelian group with $H\in \BC$.

By Proposition~\ref{Prop=2}~(i), $H$ is solvable, and by Proposition~\ref{Prop=2}~(iii), $2\mid |H|$.
Let $L$ and $K$ be a Sylow $2$-subgroup and a Hall $2'$-subgroup of $H$, respectively. By Proposition~\ref{Prop=2}, $K\in\BC$ and $L\in\BC$. Furthermore, $H=KL$, and by Proposition~\ref{Prop=2}~(iii), $K$ is abelian. Let $p_1,\ldots, p_k$ be all distinct odd prime divisors of $|H|$, and let $P_i$ be a Sylow $p_i$-subgroup of
$H$ contained in $K$ for $1\leq i\leq k$. Then $K=P_1\times P_2\times\cdots\times P_k$.
If $k=0$ then $H=L$ is a $2$-group, and by Lemma~\ref{lem=2-group},
$H\cong D_8$, $Q_8$ or $Q_8\times\Z_2$. If $k=1$ then $H$ is a $\{2,p_1\}$-group, and by Lemma~\ref{lem=2,p-group}, then $H\cong D_6$ or $D_{10}$. In what follows, we assume $k\geq 2$.

If each Hall $\{2,p_i\}$-subgroup of $H$ is abelian for each $1\leq i\leq k$, then
$L$ is abelian and $H=K\times L$, forcing that
$H$ is abelian, a contradiction.
Hence $H$ has a non-abelian Hall $\{2,p_\ell\}$-subgroup
for some prime $p_\ell$, say $M$.
It follows from Proposition~\ref{Prop=2} that
$M\in\BC$ and from Lemma~\ref{lem=2,p-group}
that $M\cong D_6$ or $D_{10}$, yielding that $L\cong\Z_2$.
Hence $K\unlhd H$ and $H=K\rtimes P_2=(P_1\times\cdots\times P_k)\rtimes\Z_2$. It follows that $P_i\unlhd H$ for each $1\leq i\leq k$, and hence $P_iL\leq H$. Furthermore, we may assume $M=P_{\ell}L$. Again by Lemma~\ref{lem=2,p-group}, for each $1\leq i\leq k$ we have either $P_iL=P_i\times L$ (abelian), or $P_iL\cong D_6$ or $D_{10}$.

Suppose $P_jL=P_j\times L$ for some $1\leq j\leq k$. Recall that $M=LP_\ell$ is a Hall $\{2,p_\ell\}$-subgroup of $H$, and $M\cong D_6$ or $D_{10}$. Clearly, $p_\ell\not=p_j$, and $MP_j=LP_\ell P_j=M \times P_j$. Then $H$ contains a subgroup isomorphic to $D_6\times \Z_{p_j}$ or $D_{10}\times \Z_{p_j}$, which is impossible by Lemma~\ref{L-2}. Note that if $p_i\not=3,5$, then $P_iL=P_i\times L$ because $P_iL\ncong D_6$ or $D_{10}$. This implies that $k=2$ as $k\geq 2$, and $\{p_1,p_2\}=\{3,5\}$. Furthermore, $\{P_1L,P_2L\}=\{D_6,D_{10}\}$ and hence  $H=P_1P_2L\cong D_{30}$ because a group of order $15$ must be cyclic, which is impossible by Proposition~\ref{Prop=1}. This completes the proof.
\end{proof}

\end{document}